\newcolumntype{2}{D{.}{}{2.0}}
\newcommand{\cA}{\mathcal{A}}
\newcommand{\cF}{\mathcal{F}}
\newcommand{\cI}{\mathcal{I}}
\newcommand{\cL}{\mathcal{L}}
\newcommand{\cO}{\mathcal{O}}
 \newcommand{\cQ}{\mathcal{Q}}
  \newcommand{\cK}{\mathcal{K}}
\newcommand{\cPT}{\mathbf{PaT}}
\newcommand{\cPaP}{\mathbf{PaP}}
\newcommand{\fB}{\frak{B}}
\newcommand{\fG}{\mathfrak{I}}
\newcommand{\fN}{\mathfrak{N}}
\newcommand{\fb}{\frak{b}}
\newcommand{\N}{\mathbb{N}}
\newcommand{\R}{\mathbb{R}}
\newcommand{\Q}{\mathbb{Q}}
\newcommand{\Z}{\mathbb{Z}}
\newcommand{\Ss}{\mathbb{S}}
\newcommand{\Pp}{\mathbb{P}}
\newcommand{\bB}{{\mathbf B}}
\newcommand{\bI}{{\mathbf I}}
\newcommand{\bS}{{\mathbf S}}
\newcommand{\bbP}{\mathbb{P}}
\newcommand{\F}{\mathbb{F}}
\def\bu{\mathbf{u}}
\newcommand{\mPaB}{{\bf mPaB}}
\newcommand{\CoB}{{\bf CoB}}
\newcommand{\mCoB}{\bf{mCoB}}
\newcommand{\PaB}{{\bf PaB}}
\newtheorem*{thm-non}{Theorem}
\newtheorem{thm}{Theorem}
\newtheorem{lem}{Lemma}
\newtheorem{cor}{Corollary}
\newtheorem{prop}{Proposition}
\theoremstyle{definition}
\newtheorem{dfn}{Definition}
\theoremstyle{remark}
\newtheorem{rem}{Remark}
\newtheorem{ex}{Example}
\theoremstyle{remark}
\theoremstyle{plain}
\newtheorem*{thm*}{Theorem}
\numberwithin{thm}{subsection}
\numberwithin{lem}{subsection}
\numberwithin{prop}{subsection}
\numberwithin{cor}{subsection}
\numberwithin{rem}{subsection}
\title[]{Manin conjecture for statistical pre-Frobenius manifolds, hypercube relations and motivic Galois group in coding}
\author{No\'emie C. Combe }
\address{Max Planck Institute for Mathematics in the Sciences,
Inselstr. 22,04103, Leipzig}
\dedicatory{To Yuri Ivanovitch Manin on the occasion of his birthday with admiration and gratefulness.
Thank you so much for teaching me so many incredible things and for opening my mind to a new world.\\
\includegraphics[scale=0.35]{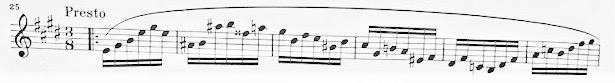} \\ (Paganini, Capriccio 3 for violin solo, {\it Presto})}
\email{noemie.combe@mis.mpg.de}
\thanks{I acknowledge the Minerva Grant from the Max Planck Society for supporting my work.}
\keywords{Exponential varieties, Moufang loops, toric varieties, Manin conjecture, Grothendieck--Teichm\"uller group, Segre embedding}
\subjclass{}
\date{\today}
\begin{document}
\maketitle

\begin{abstract}
This article develops, via the perspective of (arithmetic) algebraic geometry and category theory, different aspects of geometry of information. 
First, we describe in the terms of Eilenberg--Moore algebras over a Giry monad, the collection $Cap_n$ of all probability distributions on the measurable space $(\Omega_n, \cA)$ (where $\Omega$ is discrete with $n$ issues) and it turns out that there exists an embedding relation of Segre type among the product of $Cap_n$'s. We unravel hidden symmetries of these type of embeddings and show that there exists a hypercubic  relation. 
Secondly, we show that the Manin conjecture---initially defined concerning the diophantine geometry of Fano varieties---is true in the case of exponential statistical manifolds, defined over a discrete sample space.  
Thirdly, we introduce a modified version of the parenthesised braids ($\mPaB$), which forms a key tool in code-correction. This modified version $\mPaB$ presents all types of mistakes that could occur during a transmission process. We show that the standard parenthesised braids $\PaB$ form a full subcategory of $\mPaB$. We discuss the role of the Grothendieck--Teichm\"uller group in relation to the modified parenthesised braids. Finally, we prove that the motivic Galois group is contained in the automorphism $Aut(\widehat{\mPaB}).$ We conclude by presenting an open question concerning rational points, Commutative Moufang Loops and information geometry.
\end{abstract}

\tableofcontents
\medskip


\section{Introduction}
~

The terminology {\it geometry of information} refers to models of databases subject to noise. This connects to quantification, storage, and communication of digital information. Applications of fundamental topics of information theory include source coding, data compression, and channel coding or error detection and correction. In this paper, we consider algebraic structures occurring in geometry of information and we prove surprising connections between the theory of geometry of information and diophantine geometry (see for instance~\cite{Cor} for an introduction).

\smallskip 

We unravel tight bridges between objects of information geometry (such as manifolds of probability distributions and codes) and diophantine geometry and algebraic geometry.
In the first part of this paper, we work with the space of probability distributions on finite sets (see~\cite{Am10} for an introduction and ~\cite{CoCoNen21,CoCoNenFICC,CoMa20} for new developments). Probability distributions are used in many problems such as machine learning, vision, statistical inference, neural networks and others, this development provides a strong tool for many areas. We consider the class of statistical manifolds of exponential type.  The aim of the first part of the paper is to consider the structure of the space of probability distributions and the category of these spaces.  

Our work is subdivided into three main parts. The first part regards the collection of all probability distributions on the measurable (discrete with $n$ issues) space $Cap_n$  and their embeddings into $Cap_m$ (where $m>n$). It turns out that a hidden hypercubic symmetries appear. 
The second part, proves that the Manin conjecture concerning rational points on a Fano variety can be extended to the case of information geometry. The second part, regards a codes/ error-correcting codes aspect of information geometry and we show a tight relation to the motivic Galois group and a modified version of  parenthesised braids, serving as a way of encoding all possible errors occurring a given word. 

\smallskip 

In the first part, we give a proof of the following statement:
\begin{thm-non}(Thm. \ref{T:1})
Let $Cap_n=Cap(\Omega_{n}, \cA_{n})$ be the collection of all probability distributions on the measurable (discrete) space $(\Omega_n, \cA)$ where $\Omega$ is formed from $n$ outcomes.
Then, the diagram of all embeddings of multi-product of $\underbrace{Cap_2\times\cdots\times Cap_2}_{n+1\, times}$ in $Cap(\Omega_{2^{n+1}}, \cA_{2^{n+1}})$ has the structure of an $n$-cube.  
\end{thm-non}
\smallskip 

In the second part of this paper, we show an extension of the Manin conjecture to a wider family of objects, showing thus deep connections 
between information geometry and arithmetics/ algebraic geometry. In particular, we show that Manin's conjecture concerning the diophantine geometry of Fano varieties~\cite{ManConj} holds in the case of exponential statistical manifolds, defined over a discrete sample space. Initially, the Manin conjecture states the following. 

{\it ``Let $V$ be a Fano variety (defined over a number field $K$);
let $H$ be a height function, relative to the anticanonical divisor, and assume that $V(K)$
 is Zariski dense in $V$. Then there exists a non-empty Zariski open subset 
$U\subset V$ such that the counting function of $K$-rational points of bounded height, defined by the set
$N_{{U,H}}(B)=\#\{x\in U(K):H(x)\leq B\}$
for $B\geq 1$, satisfies the relation $N_{{U,H}}(B)\sim cB(\log B)^{{\rho -1}}$,
as $B\to \infty$, where $c$ is a constant.''}
A {\it reformulation} of this statement in terms of the pre-Frobenius statistical manifolds of exponential type (defined on a discrete sample space) is given.  We show that this conjecture is {\it true} for those manifolds of information geometry. 

\begin{thm-non}(Thm. \ref{T:2})
Consider $S=\{p(q,\theta)\}$ an exponential statistical manifold (over a discrete sample space $(\Omega,\cA)$ of finite dimension. 
\begin{itemize}
\item Let $T=(\Q^*)^m$ be the $\Q$-torus of the exponential statistical manifold given by the probability coordinates. 
\item Consider $\Pp_\Sigma$ the smooth $\Q$-compactification of the torus $T$ i.e. a smooth, projective $\Q$-variety in which $T$ lies as a dense open set and $\Sigma$ is a Galois invariant regular complete fan. 
\item Let $k$ be the rank of the Picard group $Pic(\Pp_\Sigma)$. 
\end{itemize}
Then, there is only a finite number $N(T,\cK^{-1},B)$ of $\Q$-rational points $x \in T(\Q)$ having the anticanonical height $H_{\cK^{-1}}(x)\leq B$. 
Moreover, as $B\to \infty$:
\[N(T,\cK^{-1},B)=\frac{ \Theta(\Sigma)}{(k-1)!}\cdot B(logB)^{k-1}(1+o(1)),\]
where $\Theta(\Sigma)$ is a constant.
\end{thm-non}

In former works of Manin and collaborators~\cite{Mouf,QuOp}, was shown the existence of Moufang patterns encoding various symmetries appearing naturally in models, related to storing and transmitting information such as information spaces. By Moufang patterns we have in mind in particular loops (such as Moufang loops). The latter form non-associative analogs of groups. For the case of spaces of probability distributions on finite sets the symmetries of these spaces have the structure of Commutative Moufang Loops.  

Loops and quasigroups turn out to play a central role when it comes to considering geometry of information. 
The aspect relating geometry of information and (virtually) non-commutative Moufang Loops appears in the context of error-correcting codes and algebraic-geometry codes~\cite{Mouf}.

In the second part of this work, we consider an aspect of geometry of information directly related to semantics and to the theory of error-correcting codes and to errors~\cite{Err,sem}. Any natural language can be considered as a tool for producing large databases. Communication (or transmission of information) refers to the process by which a sender communicates a message (i.e. a union of sequences of letters forming words defined in a given alphabet and associated to a given meaning) to a receiver. During a given communication the message can arrive distorted. We investigate the cases where the message is subject to distortion (or is coded) and arrives finally modified.  

A modification can take different aspects such as: permutations of letters, replacement of a letter by another one, removal of a letter or on the contrary extension of words by adding letters and sequences of words. In particular, when it comes to coding, latin squares can be used to decode the message. 

\smallskip 

We consider the space of all possible modifications of a words (indexed by their length) and suppose that our words are parenthesised. Suppose that we have a pair of parenthesised words $w$ and $w'$ of the same length. It turns out that an object which is perfect for the description of this situation and which also offers a geometric vision of paths of errors made during a transmission is tightly related to the groupoid of parenthesised braids, introduced to study the Grothendieck--Teichm\"uller group and which was first considered by Drinfeld.

In order to cover all sorts of error transmissions we introduce an {\it enriched version of the groupoid of parenthesised braids}, denoted $\mPaB$ for {\bf m}odified {\bf pa}renthesised {\bf b}raids. This modification of the classical parenthesised braid is necessary due to the fact that the words are allowed to have repeating letters and this is strongly related to loops and quasigroups. The braids are consequently impacted. Thus we equip the standard braids with two supplementary operations called {\it pinching} and {\it attaching} operations. 

This groupoid of modified parenthesised braids inherits naturally the operations of cabling $d_i$, strand removal $s_i$, extension $d_0$ as well as $\square$, the coproduct functor defined by setting each individual parenthesised braid $B$ to be group-like, i.e. $\square(B)=B\otimes B$ and $\sigma$  the elementary braid on two strands.

\smallskip 

It turns out that the there is a strong connection between the space of all possible transmission errors and arithmetics. Indeed the pro-unipotent Grothendieck--Teichm\"uller group (and thus the motivic Galois group) are  included in the automorphism  of the pro-unipotent completion of $\widehat{\mPaB}$. 

\begin{thm-non}(Thm. \ref{T:GT} and Cor. \ref{C:mot})
The motivic Galois group is contained in the automorphism of the pro-unipotent completion of the modified  parenthesised braids $Aut(\widehat{\mPaB})$.
\end{thm-non}

\smallskip 

To conclude, we discuss an open question relating the Commutative Moufang Loops (CML) structure arising in the symmetries of the spaces of probability distributions on a discrete set. The appearance of the simplest CML’s in the algebraic-geometric setup is motivated by smooth cubic curves in a projective plane $\Pp_K^2$ over a field $K$. It is in particular shown that the set $E$ of $K$-points of such a curve $X$ forms a CML. Regarding the result of the first part of the paper, we conjecture that the set $E$ of $\Q$-points of a pre-Frobenius statistical manifold  forms also a CML.
 
\smallskip 
\subsection*{Plan of the paper}

-Sec.\ref{S:1}. of the paper is devoted to considering $Cap_n$ which is the collection of all probability distributions on the measurable space $(\Omega_n, \cA)$. The sample space is discrete and $\Omega_n$ has $n$ outputs. There exists an associated monad (called the Giry monad) and an algebra over it (Eilenberg--Moore algebra).  We discuss in particular the relation among the product of $Cap_n$, which turns out to be hypercubic. 
Secondly, we prove that the the Manin conjecture holds for (pre-Frobenius) the statistical manifolds related to exponential families and defined over a discrete (finite) set (Sec.\ref{S:ManinConj}). 

-Sec.  \ref{S:3} we discuss another aspect that geometry of information can take, via codes and error codes. It serves as an intermezzo between Sec. 1 and the Sec. 3. and prepares the ground for what follows. After recalling definitions on Loops and quasigroups we study in particular the algebraic properties of the space of modified words and show that quasigroups and loops offer a perfect set up for this. 

-Sec. \ref{S:mPaB} we introduce our modified parenthesised braids, which forms a key tool in code-correction. We show that the standard parenthesised braids $\PaB$ are a full subcategory of $\mPaB$. We discuss the role of the Grothendieck--Teichm\"uller group in relation to the modified parenthesised braids (Sec.\ref{S:GT}). Finally, 
we end the section by showing that the motivic Galois group is contained in the automorphism $Aut(\widehat{\mPaB}).$ We conclude finally by presenting an open question concerning rational points, Commutative Moufang Loops and information geometry \ref{S:conj}.

\medskip 

\section{Statistical pre-Frobenius manifolds in relation to algebraic geometry and Manin's conjecture}\label{S:1}
\subsection{Categorical introduction of considered objects}
Dealing with classical information theory leads to working in the following framework.  Let $(\Omega,\cA,P_0)$ a probability space where $\Omega$ is the space of elementary outputs, $\cA$ is the $\sigma$-algebra of events and $P_0$ is a probability measure, (usually  $P_0\ll \mu$, $P_0[A]=\int_A f_0d\mu$ for some ($\sigma$-finite) measure $\mu$).

\smallskip
 
-- The algebra $\fB_c$ is  the  (commutative) algebra of all bounded measurable functions $f$ on the space of elementary outcomes $\Omega$.  

$ \fB_c $ algebra with respect of addition and multiplication by a scalar of function $f:\omega\to \R,\quad f^{-1}(x)\in \cA, \quad x\in \R$

\smallskip

-- The probability state of an object is determined by a nonnegative, normalized, normal (i.e., ultra- weakly continuous, or what is the same, monotone continuous) linear functional $\Phi_c$ one, $\fB_c$,  $\Phi_c:\fB_c\to \R$, which is the expectation with respect to some probability $P_0$:
\[ \Phi_c(f) =\mathbb{E}_{P_0}[f].\]

\vspace{5pt}
-- The set $\fG(\fB_c)$ of all states $\Phi_c$ of an object is a convex closed set in the pre-dual space $\fB_c$,\, $(\fB_c)^\star=\fB_c$.

\vspace{5pt}
-- The idempotents of $\fB_c$ are just the indicators (characteristic functions) of the measurable sets (elements of $\cA$), these subspaces are called events (or ``yes-no'' experiments).

Before we enter a categorical definition, let us mention that the collection of all probability measures on a measurable space $(\Omega,\cA)$ of elementary outcomes
is a {\it convex subset} of the semi-ordered linear space of measures of bounded variations on  $(\Omega,\cA)$. 
In some cases, it useful to remark that the collection of all probability measures on $(\Omega,\cA)$ is equipped with a norm, giving rise to a metric space. However, this aspect will not be important to us here. 

\smallskip 

These measures are endowed with the supplementary property that they are {\it invariant} under maps of the collection of probability measures induced by invertible measurable maps of the sample space $(\Omega,\cA)$. This means that given a pair of sample spaces $(\Omega_1,\cA_1)$ and $(\Omega_2,\cA_2)$ equipped with their corresponding collection of probability measures, say $\{P_{\theta}^i, \, \theta\in \Theta \}$ where $i\in {1,2}$ and with (same) parameter set $\Theta\in \mathbb{R}^n$ one can develop a notion of {\it equivalence}: $\{P_{\theta}^1, \, \theta\in \Theta \}$ and $\{P_{\theta}^2, \, \theta\in \Theta \}$ are said to be equivalent whenever there exist Markov maps $\Pi^{12}$ and $\Pi^{21}$ such that $P^{1}_{\theta}\Pi^{12}= P^{2}_{\theta}$ and  $P^{2}_{\theta}\Pi^{21}=P^{1}_{\theta}$, for  any $\theta\in\Theta$.

We call $Cap$ the collection of all probability distributions on the measurable space $(\Omega,\cA).$
The discussion above leads to defining a category denoted $CAP$, where objects are isomorphic classes of collections of all probability distributions on  the measurable spaces $(\Omega,\cA)$; morphisms are given by the Markov maps. These Markov maps correspond to statistical decision rules in the sense of Wald. 

Further algebraic operations on the objects of the category are allowed and are defined as follows. One can define a {\it direct product} of measurable spaces. This construction implies the existence of a tensor product on the collection of all probability measures on those measurable spaces. This multiplication is functorial with respect to the Markov category.

To give an example, let us take $Cap_2$, where $\Omega=\{\omega_1,\omega_2\}$ and the probability distributions $p=\langle p_1,p_2\rangle.$
Defining $Cap_2\times Cap_2$ is given by $\{\omega_1,\omega_2\} \times\{\omega'_1,\omega'_2\}$ and this corresponds to $\{\omega_1\omega'_1,\omega_1\omega'_2,\omega'_1\omega_1,\omega'_2\omega_1\}$, which can be rewritten as: 
$\{\omega^{''}_1,\omega^{''}_2,\omega^{''}_3,\omega^{''}_4\}$. If $p=\langle p_1,p_2\rangle$ and $q=\langle q_1,q_2\rangle$ are the corresponding probability distributions then the tensor product on the probability distributions is such that: 
\[p\otimes q=\langle p_1q_1,p_1q_2, p_2q_1,p_2q_2\rangle.\]

We shall investigate more precisely what happens during this multiplication process in $Cap_n$. In particular we show, using Segre type of embeddings that we have a hypercube type of relation within these operations.

However, note that in this paper, we are not interested in the quantum aspect of information theory and we limit ourselves to the case where the algebra is commutative. Hence, we do not consider the quantum information geometry aspect which requires a von Neumann algebra $\fb$ of bounded linear operators acting on Hilbert space.  

This algebra $\fb$ corresponds to a (generally non-commutative) analogue of the classical commutative algebra of all bounded measurable functions on the space of elementary outcomes. The Hermitian elements of the algebra $\fb$ are called bounded observables. The probability state of an object is determined by a (nonnegative, normalised, normal, monotone continuous) linear functional $\phi$ on the algebra $\fb$. The set of all states $S(\fb)$  of a given object forms a convex closed set in the pre-dual space $\fb_*$, where $(\fb_*)^*=\fb$. 

Moreover, analogous constructions concerning the Markov maps can be defined and the system of all Markov maps of all collections $S(\fb)$ forms an algebraic category.

To summarise, there exist tight relations between convex sets and the sets of states in the probabilistic computation (discrete or continuous) and in quantum computation. We will explore this from a more categorically aspect. In particular, we invoke the Giry monad structure and define an Eilenberg--Moore algebra over this monad to consider our convex sets for information geometry.

\smallskip 

Consider the category ${\rm Set}$ of finite sets. Given an object $X$ of   ${\rm Set}$, we define 
\[\Delta_X=\big\{\tilde{f}: X\to [0,1]\, |\, \tilde{f}\, \text{has finite support and} \, \sum_{x\in X} \tilde{f}(x)=1\]
the simplex over $X$. It is the set of formal finite convex combinations of elements from $X$. 
Elements of $\Delta_X$ are the discrete probability distributions over $X$. The mapping from the set $X$ to the set $\Delta_X$ can be made functorial (known as the simplex functor) and defined such that given a morphism of sets $f:X\to Y$ one defines $\Delta(f):\Delta_X\to \Delta_Y$. 
One may define, for these convex sets, the algebraic structure of a monad $(\Delta,\mu,\eta)$, where the unit is given by $\eta:X\to \Delta_X$ and the multiplication is defined by $\mu:\Delta_X^2\to \Delta_X$. This monad is commutative. Moreover, given an object $X\in Ob({\rm Set})$ and the {\it structure map} $\gamma:\Delta_X\to X$ commutativity for the following diagrams is satisfied:
\begin{center} \begin{tikzcd}
  X \arrow[rd,"\eta_X"] \arrow[r,"Id_X"] & X \\
  & \Delta_X\arrow[u, "\gamma"]
  \end{tikzcd}
 \begin{tikzcd}
 \Delta_{\Delta_X} \arrow[d,"\mu_X"] \arrow[r, "\Delta_{\gamma}"] & \Delta_X \arrow[d,"\gamma"] \\
  \Delta_X \arrow[r, "\gamma"]& X.
  \end{tikzcd}
\end{center}

Taking a pair $(X,\gamma)$ allows to work in an Eilenberg--Moore algebra for the distribution monad $(\Delta,\mu,\eta)$ over the symmetric monoidal category of sets. An algebra morphism $f:(X,\gamma)\to (X',\gamma')$  is a continuous map such that the following diagram commutes:
\begin{center}
  \begin{tikzcd}
 \Delta_{X} \arrow[d,"\Delta(f)"] \arrow[r, "{\gamma}"] & X \arrow[d,"f"] \\
  \Delta_{X'} \arrow[r, "\gamma'"]& X'.
  \end{tikzcd}
\end{center}

An Eilenberg--Moore algebra of the monad $(\Delta,\mu,\eta)$ is a map of the form $\gamma: \Delta_X \to X$ s.t. $\gamma \circ \eta=id$ and $\gamma\circ \mu=\gamma \circ \Delta_\gamma.$ Note that each category of algebras for a monad on sets is cocomplete.  
Regarding the category of Eilenberg--Moore algebras it is both complete and cocomplete.

A more global idea hides behind this, in the context of probability distributions and concerning the relation between algebras for the Giry monad~\cite{Giry} and the convex spaces formed by the collection of probability distributions. This comes from the following statement:

The category of algebras for the Giry monad is isomorphic to the category of $G$-partitions. Here by $G$-partition we mean that for $X$ (in a fully general setting $X$ is a polish space, i.e. a separable metric space for which a complete metric exists) corresponds to a collection $\{G(x)\, |\, x\in X\}$, which forms a positive convex partition for $\Delta$ into closed sets indexed by $X$.
Moreover, $\delta_x\in G(x)$ (where $\delta_x$ is the Dirac measure on $x$) holds for all $x\in X$, and the set valued map $x\mapsto G(x)$ is $k$-upper-semicontinuous.

Now, algebras over a commutative monad admit a tensor product. By $\star$ we denote the monoidal multiplication of $\Delta$.  
So, given $\Delta$-algebras $(A,a)$ and $(B,b)$ their tensor product is the object $A\otimes_\Delta B$ given by:
\[\Delta(\Delta A\otimes \Delta B)\xrightarrow{\star}\Delta \Delta(A\otimes B)\xrightarrow{\mu}\Delta(A\otimes B)\to A\otimes_\Delta B.\]

\smallskip 
Now that we have explicitly shown the algebraic structure of the convex spaces of probability distributions, and discussed the tensor product operation for algebras over the Giry monad, we are interested in proving the existence of hidden symmetries appearing within the multiplication relations between $Cap_n$'s. In particular, this leads to proving the existence of a hypercube relation.

Let us go back to the previous example $Cap_2\times Cap_2\hookrightarrow Cap_4$. Recall the tensor product relation on the probability distributions ``\`a la Morozova--Chentsov''~\cite{MoCh}, where one considers the probability distributions under the shape of a vector in an affine space. Regarding our example, this gives us: 
\[p\otimes q=\langle p_0q_0,p_0q_1, p_1q_0,p_1q_1\rangle,\]
so that we can consider $p\otimes q$ as the 4-tuple: $\langle p_0q_0,p_0q_1, p_1q_0,p_1q_1\rangle=\langle p'_0,p'_1,p'_2,p'_3\rangle$ defined in the affine space. This relation can obviously be generalised for any $n$ i.e for $Cap_n$. 

Now, from an affine $n$-tuple $p=\langle p_1,\cdots, p_n\rangle$ one can take easily the homogeneous coordinates: $P=[p_1:p_2:\cdots: p_n]$. So, now whenever we consider the product $P\otimes Q$, where we consider it from the projective perspective, 
one has $P=[p_0:p_1:\cdots: p_n]$ and $Q=[q_0:q_1:\cdots: q_m]$. 
This leads to defining a (real) Segre embedding, which looks as follows:
\[h:\mathbb{P}^n\times \mathbb{P}^m \hookrightarrow\mathbb{P}^{(n+1)(m+1)-1},\]
where $\mathbb{P}^n$ is the {\it real} projective space. 

Taking a pair of points $(P,Q)$ in the projective space $\mathbb{P}^n\times \mathbb{P}^m$ (corresponding to elements in $Cap$) one can define the product:
\[([p_0:p_1:\cdots p_n],[q_0:q_1:\cdots q_m])\mapsto [p_0q_0:p_0q_1:\cdots:p_iq_j:\cdots p_nq_m].\]

In particular, going back to the $Cap_2$ case, with $P=[p_0:p_1]$ (resp. $Q=[q_0:q_1]$), where $0\leq p_i\leq 1$ and $p_0+p_1=1$  (resp. $0\leq q_i\leq 1$ and $q_0+q_1=1$) we have the following commutative diagram:

\begin{center}
  \begin{tikzcd}
\mathbb{P}^1\times \mathbb{P}^1 \times \mathbb{P}^1 \arrow[d,"Id\times h_{(23)}"] \arrow[r,"h_{(12)}\times Id"] & \mathbb{P}^3  \times \mathbb{P}^1 \arrow[d,"h_{((12)3)}"] \\
 \mathbb{P}^1  \times \mathbb{P}^3 \arrow[r,"h_{1(23)}"] & \mathbb{P}^7
  \end{tikzcd}
\end{center}

The following embedding 
\[\underbrace{\mathbb{P}^1\times \cdots \times\mathbb{P}^1}_{n\ times}\to \mathbb{P}^{2^n-1}\]
corresponds to a generalised Segre embedding. This allows to consider the relation between $Cap_2$ and $Cap_{2^n}$ and leads to the following remark on the geometry of $Cap_{2^n}$.
\begin{rem}
Note that this highlights a new way to show the existence of a paracomplex structure on objects in $Cap$. Note that a paracomplex projective space of dimension $n$ is identified to a product of projective spaces of the same dimension $n$ (and defined over the real numbers) i.e.  $\mathbb{P}^n\times \mathbb{P}^n$. Since, we can use the Segre embedding and the construction above we can see that for instance in $Cap_4$ we have an embedded paracomplex projective space.  Using the generalised Segre embedding, the statement generalises for $Cap_{2^n}$. The paracomplex structure has been mentioned in the work~\cite{CoMa20}. However, this gives another approach to that result.  
 \end{rem}
The generalised Segre embedding implies the existence of $(n-1)$-hypercube relations when considering $\underbrace{Cap_2\times \cdots \times Cap_2}_{n\, times}.$ 
Let us start with the following relation. 
\[\underbrace{\mathbb{P}^1\times \cdots \times\mathbb{P}^1}_{n\ times}\to \mathbb{P}^{2^n-1}.\]
We proceed by analogy on $Cap_n$ so that we can in fact obtain a similar relation as in the Segre emebedding: 

\[\underbrace{Cap_2\times \cdots \times Cap_2}_{n\, times}\to Cap_{2^n}\]

Take $n=3$. For the product $Cap_2\times Cap_2\times Cap_2$ we have that the following square commutative diagram:
\begin{center}
  \begin{tikzcd}
\mathbb{P}^1\times \mathbb{P}^1 \times \mathbb{P}^1 \arrow[rr, "h_{(12)}\times Id_3"] \arrow[d,"Id_1\times h_{23}" ] & &
\mathbb{P}^3  \times \mathbb{P}^1 \arrow[d, "h_{(12) 3}" ] \\
 \mathbb{P}^1  \times \mathbb{P}^3 \arrow[rr, "h_{1 (23)}"] && \mathbb{P}^7
  \end{tikzcd}
\end{center}
\begin{thm}\label{T:1}
The diagram of embeddings of $\underbrace{Cap_2\times \cdots \times Cap_2}_{n+1}$ in $Cap_{2^{n+1}}$ has the structure of an $n$-cube.
\end{thm}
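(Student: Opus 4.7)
The plan is to exhibit an explicit functor from the cubical poset $(2^{\{1,\dots,n\}}, \subseteq)$ --- which is by definition the $n$-cube --- onto the diagram of Segre embeddings, and then to verify that it identifies the two diagrams. The combinatorial key is that, placing the $n+1$ factors at positions $0,1,\ldots,n$, the merging operations take place at the $n$ gaps $1,2,\ldots,n$ between consecutive factors, so a state of ``partial merging'' is naturally recorded by the subset $S\subseteq\{1,\ldots,n\}$ of gaps that have already been collapsed. This produces $2^n$ intermediate vertices, exactly as expected for an $n$-cube.

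More concretely, for each $S\subseteq\{1,\ldots,n\}$ I would let $\pi(S)$ be the partition of $\{0,1,\ldots,n\}$ into maximal runs of consecutive indices separated only by gaps in $S$, and let $B_1(S),\ldots,B_{k(S)}(S)$ be its blocks from left to right. To the vertex $S$ I would assign
\[
V(S) \;=\; Cap_{2^{|B_1(S)|}}\times Cap_{2^{|B_2(S)|}}\times\cdots\times Cap_{2^{|B_{k(S)}(S)|}},
\]
so that $V(\emptyset)=\underbrace{Cap_2\times\cdots\times Cap_2}_{n+1}$ and $V(\{1,\ldots,n\})=Cap_{2^{n+1}}$. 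To each covering relation $S\subsetneq S\cup\{i\}$ I would attach the edge map $V(S)\to V(S\cup\{i\})$ that acts as the identity on every factor whose block is unchanged and as the Segre embedding $Cap_{2^a}\times Cap_{2^b}\hookrightarrow Cap_{2^{a+b}}$ on the two adjacent factors whose blocks in $\pi(S)$ contain $i-1$ and $i$.

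The heart of the argument is then the commutativity of every elementary 2-face. Given $S$ and distinct $i,j\notin S$, two cases occur. Either the gaps $i$ and $j$ separate four pairwise distinct blocks of $\pi(S)$, in which case the two edge maps act on disjoint pairs of factors and commute tautologically; or the two gaps are contiguous to a common central block, so that both paths from $V(S)$ to $V(S\cup\{i,j\})$ collapse to the associativity diagram
\[
(Cap_{2^a}\times Cap_{2^b})\times Cap_{2^c} \;\hookrightarrow\; Cap_{2^{a+b+c}} \;\hookleftarrow\; Cap_{2^a}\times (Cap_{2^b}\times Cap_{2^c}),
\]
whose commutativity is exactly $(p_iq_j)r_k = p_i(q_jr_k)$ in homogeneous coordinates, i.e.\ the associativity of the generalised Segre embedding $\mathbb{P}^{2^a-1}\times\mathbb{P}^{2^b-1}\times\mathbb{P}^{2^c-1}\hookrightarrow\mathbb{P}^{2^{a+b+c}-1}$; the displayed square in the excerpt is the case $(a,b,c)=(1,1,1)$. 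This associativity case is the only non-trivial point and the main (though essentially routine) obstacle. Once every 2-face commutes, a standard induction on $|T\setminus S|$ upgrades commutativity to every face of the cube, so $S\mapsto V(S)$ defines a strict functor from $(2^{\{1,\ldots,n\}},\subseteq)$ into $CAP$ whose image is precisely the diagram of embeddings from $\underbrace{Cap_2\times\cdots\times Cap_2}_{n+1}$ to $Cap_{2^{n+1}}$, realising it as an $n$-cube.
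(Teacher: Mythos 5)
Your proof takes essentially the same route as the paper: your subsets $S\subseteq\{1,\dots,n\}$ of collapsed gaps are exactly the paper's binary words $(x_1,\dots,x_n)$ recording which adjacent pairs have been Segre-merged, and edges again correspond to changing a single coordinate. You go somewhat further than the paper by defining the edge maps explicitly via the partition into maximal runs and by verifying commutativity of every $2$-face (disjoint supports versus Segre associativity), a point the paper only illustrates in low dimensions; this is a strengthening of the same argument rather than a different approach.
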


\begin{proof}
The proof can be done by using a bijection between the set of vertices and edges of the generalised Segre embedding diagram and the set of vertices/ edges constructing the $n$-cube.

Take a product $\underbrace{\mathbb{P}^1\times\cdots \times \mathbb{P}^1}_{n+1}$. For any pair of (adjacent) projective spaces in this cartesian product, to which the Segre embedding is applied, add a pair of parenthesis (one open and one closed) such that
 \[(\mathbb{P}^1\times\mathbb{P}^1\times \cdots \underbrace{(\mathbb{P}^1\times\mathbb{P}^1)}_{i,i+1}\cdots \times \mathbb{P}^1)\hookrightarrow (\mathbb{P}^1\times\mathbb{P}^1\times \cdots \underbrace{\mathbb{P}^3}_{i}\cdots \times \mathbb{P}^1).\] 

The construction goes as follows and it goes by induction on $n\geq1$. 
\begin{itemize}
\item To any  parenthesised product $\underbrace{(\mathbb{P}^1\times \mathbb{P}^1) \times \mathbb{P}^1\times \cdots\times (\mathbb{P}^1\times \mathbb{P}^1)}_{n+1}$ corresponds a binary word $x=(x_1,\cdots, x_n)$ with $n$ letters such that $x_i\in\{0,1\}$, such that $\underbrace{\mathbb{P}^1\overbrace{\times}^{(x_1,}\mathbb{P}^1\overbrace{\times}^{x_2,} \mathbb{P}^1\cdots\overbrace{\times}^{\cdots,x_n)}\mathbb{P}^1}_{n+1}$ where:
\end{itemize}
\[\begin{cases}
 x_i=0 & \text{if there exists {\bf no} parenthesis for the pair} (i,i+1) \text{of projective spaces:}\\
 & \underbrace{\mathbb{P}^1\times\cdots \overbrace{\mathbb{P}^1\times \mathbb{P}^1}^{i,i+1}\times \mathbb{P}^1}_{n+1}. \\
 x_j=1 & \text{if there exists a parenthesis for the pair} \underbrace{\mathbb{P}^1\times\cdots \overbrace{(\mathbb{P}^1\times \mathbb{P}^1)}^{j,j+1}\times \mathbb{P}^1}_{n+1}\\
\end{cases}\]
So the number of parenthesis is given by the number of units in the word. For each unit added, the remaining zeros of the word correspond to the remaining projective spaces which have not yet been paired.
\begin{itemize}
\item for any $n\geq1$, the product $\underbrace{\mathbb{P}^1\times\cdots \times \mathbb{P}^1}_{n+1}$ corresponds to  $\underbrace{(0,\cdots ,0)}_{n}$.
\item $(1,1,\cdots,1)$ corresponds to the projective space $\mathbb{P}^{2^{n+1}-1}$. 
\item Each combination of parenthesis being encoded by a binary word $(x_1,\cdots, x_n)$ corresponds to the vertex of the Segre embedding diagram.
\item Add one parenthesis to a given combination. This corresponds to adding a unit to the word i.e. $x_j=1$ if we have $\mathbb{P}^1\times\mathbb{P}^1\times \cdots \underbrace{(\mathbb{P}^1\times\mathbb{P}^1)}_{j,j+1}\cdots \times \mathbb{P}^1$ at the $j$-th and $(j+1)$-th position. 
\item An edge of the diagram is drawn whenever to words $x$ and $x'$ differ only by one letter i.e. there exists one unique $j$ such that $x_j\neq x'_j$. For $i\neq j$ we have $x_i=x'_j$.  
\end{itemize}

$\star$ Let us discuss the low dimensional case. Take $\mathbb{P}^1\times\mathbb{P}^1$. The corresponding word has one letter $(x_1)$. This corresponds to $Cap_2\times Cap_2$. Then, by the Segre embedding  $(\mathbb{P}^1\times\mathbb{P}^1)\hookrightarrow \mathbb{P}^3$.  
The new vertex obtained by the Segre embedding modifies the word 0 into the word  1. The diagram is just a segment (so a cube of diemsion 1).

$\star$ We have $\mathbb{P}^1\times \mathbb{P}^1\times \mathbb{P}^1$. Let us use our construction, where vertices of the embedding diagram are encoded by the binary words of length 2, i.e. $(x_1,x_2)$ where $x_i\in \{0,1\}.$
The initial vertex is encoded by $\mathbb{P}^1\times \mathbb{P}^1\times \mathbb{P}^1$ which corresponds to the word $(0,0)$.
Two vertices are connected by an edge whenever the pair of corresponding words differ by only one character. So, for instance, the vertex (0,0) is directly connected by an edge to the vertices $(0,1)$ and $(1,0)$ but not connected to the vertex word (1,1). The diagram is a square. 

$\star$  For the case $Cap_2\times Cap_2\times Cap_2\times Cap_2$, it is easy to check that one has a cube diagram relation.

$\star$ In full generality, the relations of embeddings of $\underbrace{\mathbb{P}^1\times \cdots \times\mathbb{P}^1}_{n+1\ times}$  in the generalised Segre embedding have the structure of a $n$-hypercube graph.  

In fact, this statement follows from the definition of a hypercube (or $n$-cube) which is a graph of order $2^n$, whose vertices are represented by $n$-tuples $(x_1,\cdots x_n)$ where $x_i\in \{0,1\}$ and whose edges connect vertices which differ in exactly one term. 
We use the construction above, where we have established a bijection between the set of vertices indexed by binary words and the parenthesised product of projective spaces; edges of the $n$-cube correspond to applying one Segre embedding to a pair of  parenthesised projective spaces.

So, to conclude we have a hypercube graph relation illustrating the diagram of relations in $Cap_n$.
\end{proof}

We illustrate a four dimensional cube (a tesseract) in the figure below (Fig. 1), where vertices are indexed by words of length 4 and letters are in $\{0,1\}$.
Using the above construction, we can exactly illustrate the Segre embedding relations for
:\[\underbrace{Cap_2\times \cdots \times Cap_2}_{5\, times} \to Cap_{32},\] which in a projective version corresponds to illustrating $\underbrace{\mathbb{P}^1\times \cdots \times \mathbb{P}^1}_{5\, times} \to \mathbb{P}^{31}.$

\begin{center} 
     \begin{tikzpicture}[scale=1.0]
  \draw[fill] (-0, 5) circle (.07cm);  \node (-0,5) at (-0,5.2) {$\scriptstyle (1111)$};
  \node (-0,5) at (-0,5.6) {$\scriptstyle \bbP^{31}$};
  \draw[fill] (-0, 1.9) circle (.07cm); \node (-0,19) at (-0,2.2) {$\scriptstyle (0110)$};
  \draw[fill] (-0, -2) circle (.07cm);  \node (-0,-2) at (-0,-2.4) {$\scriptstyle (1001)$};
  \draw[fill] (-0, -5) circle (.07cm);  \node (-0,-5) at (-0,-5.2) {$\scriptstyle (0000)$};
   \node (-0,-5) at (-0,-5.6) {$\scriptstyle \bbP^1\times\bbP^1\times\bbP^1\times\bbP^1\times\bbP^1$};
 \draw[fill] (-4, 3.5) circle (.07cm);  \node (-4,3.5) at (-4.7,3.5) {$\scriptstyle (0111)$};
 \draw[fill] (4, 3.5) circle (.07cm);  \node (4,3.5) at (4.6,3.5) {$\scriptstyle (1110)$};
 \draw[fill] (-4, -3.5) circle (.07cm);  \node (-4,5) at (-4.5,-3.5) {$\scriptstyle (0001)$};
 \draw[fill] (4, -3.5) circle (.07cm);  \node (4,-3.5) at (4.5,-3.5) {$\scriptstyle (1000)$};
  \draw[fill] (-5.7, 0) circle (.07cm);  \node (-5.7,0) at (-6.4,0) {$\scriptstyle (0011)$};
 \draw[fill] (5.7, 0) circle (.07cm);  \node (5.7,0) at (6.2,0) {$\scriptstyle (1100)$};
  \draw[fill] (-2, 0) circle (.07cm);  \node (-2,0) at (-2.5,0) {$\scriptstyle (0101)$};
 \draw[fill] (2, 0) circle (.07cm);  \node (1.9,0) at (2.5,0) {$\scriptstyle (1010)$};
  \draw[fill] (-1.5, 1.5) circle (.07cm);  \node (-1.5,1.5) at (-1.99,1.6) {$\scriptstyle (1011)$};
\draw[fill] (1.6, -1.5) circle (.07cm);  \node (1.6,-1.5) at (2,-1.8) {$\scriptstyle (0100)$};
\draw[fill] (-1.6, -1.45) circle (.07cm);  \node (-1.6,-1.345) at (-2.1,-1.6) {$\scriptstyle (0010)$};
\draw[fill] (1.55, 1.35) circle (.07cm);  \node (1.55,1.35) at (2,1.5) {$\scriptstyle (1101)$};

               \draw[black] (-0,5) -- (-4,3.5);
               \draw[black] (-0,5) -- (4,3.5);   
                   \draw[black] (-4,3.5) -- (-0,1.9);
                    \draw[black] (-0,1.9) -- (4,3.5);
                    \draw[black] (-0,-5) -- (-4,-3.5);
               \draw[black] (-0,-5) -- (4,-3.5);    
                   \draw[black] (-4,-3.5) -- (-0,-2);
                    \draw[black] (-0,-2) -- (4,-3.5); 
        \draw[black] (-5.7,0) -- (-4,-3.5);
          \draw[black] (5.7,0) -- (4,3.5);
         \draw[black] (5.7,0) -- (4,-3.5);
         \draw[black] (0,5) -- (-1.5,1.5);
        \draw[black] (2,0) -- (4,3.5);
         \draw[black] (2,0) -- (4,-3.5);
          \draw[black] (-5.7,0) -- (-4,3.5);
         \draw[black] (-2,0) -- (-4,-3.5);
         \draw[black] (-2,0) -- (-4,3.5);
         \draw[black] (-1.5,1.5) -- (-5.7,0);
          \draw[black] (1.6,-1.5) -- (5.7,0);
           \draw[black] (-1.5,1.5) -- (-0,-2);
           \draw[black] (1.6,-1.5) -- (-0,1.9);
                \draw[black] (0,-5) -- (-1.6,-1.45);
               \draw[black] (-5.7,0) -- (-1.6,-1.45);
                \draw[black] (1.9,0) -- (-1.6,-1.45);
                 \draw[black] (0,5) -- (1.55, 1.35);
                 \draw[black] (5.7,0) -- (1.55, 1.35);
                  \draw[black] (0,-5) -- (1.6, -1.45);
                   \draw[black] (0,-2) -- (1.55, 1.35);
                    \draw[black] (-2,0) -- (1.6, -1.5);
                    \draw[black] (-2,0) -- (1.55, 1.35);
                  \draw[black] (0,1.9) -- (-1.6, -1.45);
                  \draw[black] (1.9,0) -- (-1.6, -1.45);
\draw[black] (1.9,0) -- (-1.55, 1.5);

                  \end{tikzpicture}
                  \smallskip 
                  
            Figure 1:  Segre diagram for $\mathbb{P}^1 \times \mathbb{P}^1\times \mathbb{P}^1\times \mathbb{P}^1\times \mathbb{P}^1$ with vertices labeled by binary words (commas have been omitted for simplicity).

\end{center}

\subsection{Manin's conjecture theorem for discrete exponential families}\label{S:ManinConj}
The theory of exponential varieties reveals the existence of a surprisingly strong connection to diophantine geometry. We show that the asymptotic formula conjectured by Manin, for the case of Fano varieties, concerning the number of $K$-rational points of bounded height with respect to the anticanonical line bundle {\it holds} in the case of a smooth projectivisation of an exponential variety (defined for a discrete finite sample space). This statement extends to the framework of information geometry the conjectured by Manin, which initially was stated in the context of algebraic geometry.

Regarding the previous subsection, we are now working on an object of the category $Cap$. 
Our statement goes as follows: 

\begin{thm}\label{T:2}
Consider $S=\{p(q,\theta)\}$ an exponential statistical manifold (over a discrete sample space $(\Omega,\cA)$) of finite dimension. 
\begin{itemize}
\item Let $T=(\Q^*)^m$ be the $\Q$-torus of the exponential statistical manifold given by the probability coordinates. 
\item Consider $\Pp_\Sigma$ the smooth $\Q$-compactification of the torus $T$ i.e. a smooth, projective $\Q$-variety in which $T$ lies as a dense open set and $\Sigma$ is a Galois invariant regular complete fan. 
\item Let $k$ be the rank of the Picard group $Pic(\Pp_\Sigma)$. 
\end{itemize}
Then, there is only a finite number $N(T,\cK^{-1},B)$ of $\Q$-rational points $x \in T(\Q)$ having the anticanonical height $H_{\cK^{-1}}(x)\leq B$. 
Moreover, as $B\to \infty$:
\[N(T,\cK^{-1},B)=\frac{ \Theta(\Sigma)}{(k-1)!}\cdot B(logB)^{k-1}(1+o(1)),\]
where $\Theta(\Sigma)$ is a constant.
\end{thm}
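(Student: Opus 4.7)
The strategy is to recognise that the hypotheses of the theorem place us exactly in the setting of the Batyrev--Tschinkel theorem on Manin's conjecture for smooth projective toric varieties over a number field, and then to invoke that theorem with $K=\Q$.

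First I would spell out the $\Q$-algebraic-group structure hiding in the space of probability coordinates. For a discrete sample space with $N$ outcomes, the probabilities $(p_1,\dots,p_N)$ of an exponential family are strictly positive; forgetting the normalisation $\sum p_i=1$ (equivalently, working with the natural parameters of the exponential family) identifies the open stratum with an $m$-dimensional multiplicative torus, whose $\Q$-structure is inherited from the choice of rational natural parameters. This realises $T=(\Q^*)^m$ as an algebraic torus defined over $\Q$, sitting inside $S$ as a Zariski open subset.

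Next I would identify $\Pp_\Sigma$ as a smooth projective toric $\Q$-variety. By hypothesis $T$ is dense and open in $\Pp_\Sigma$; the translation action of $T$ on itself extends to an action on $\Pp_\Sigma$, and because $\Sigma$ is complete and regular the affine charts of $\Pp_\Sigma$ are exactly the $T$-invariant toric affines attached to the cones $\sigma\in\Sigma$. Galois invariance of $\Sigma$ is precisely what is needed for this toric structure to descend from $\overline{\Q}$ to $\Q$.

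With this dictionary in place I would apply the Batyrev--Tschinkel theorem on rational points of bounded anticanonical height on smooth projective toric varieties, with $H_{\cK^{-1}}$ given by the standard adelic construction from the anticanonical line bundle. Their theorem provides both the finiteness of $N(T,\cK^{-1},B)$ (also a direct consequence of Northcott's theorem applied to a projective embedding) and the asymptotic
\[
N(T,\cK^{-1},B)=\frac{\Theta(\Sigma)}{(k-1)!}\cdot B(\log B)^{k-1}(1+o(1)),\qquad B\to\infty,
\]
where the leading constant $\Theta(\Sigma)$ is assembled from a Tamagawa-type measure on $T(\A_\Q)$, the volume of the effective cone in $Pic(\Pp_\Sigma)_\R$, and an Euler product over the places of $\Q$. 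This is exactly the formula claimed.

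The main obstacle is the first step: making the passage from the analytic/statistical object $S$ to an honest $\Q$-torus sufficiently precise that (i) the natural parameters descend to $\Q$, (ii) the embedding $T\hookrightarrow\Pp_\Sigma$ is a morphism of $\Q$-varieties compatible with the Galois action, and (iii) the anticanonical height coming from toric geometry coincides with the one intrinsic to the statistical structure so that Batyrev--Tschinkel can be applied verbatim. Once this dictionary is established, the remaining steps are invocations of classical toric geometry and of Batyrev--Tschinkel's theorem.
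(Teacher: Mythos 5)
Your proposal follows essentially the same route as the paper: the paper first proves (Prop.~\ref{P:tor}) that the discrete exponential family is a real toric variety by turning the integer directional statistics $q_{ij}$ into Laurent monomials $t^{\mathbf{q}_i}$ in the probability coordinates $t_i=e^{\theta^i}$ and taking the kernel of the induced semigroup-algebra map as the toric ideal, and then invokes the Batyrev--Tschinkel theorem for smooth projective toric $\Q$-varieties to obtain both the finiteness and the asymptotic $\frac{\Theta(\Sigma)}{(k-1)!}B(\log B)^{k-1}$. Your identification of the main obstacle (making the statistical-to-toric dictionary precise) is exactly what Prop.~\ref{P:tor} supplies, so the plan matches the paper's argument.
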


\begin{rem}
The exponential statistical manifold is a pre-Frobenius manifold and we will can refer to it as the pre-Frobenius statistical manifold (for a definition of pre-Frobenius manifold we refer from instance to~\cite{Man99}).
\end{rem}
The proof of this statement is done in two parts. The first is to state explicitly the relation from exponential varieties (defined as above for finite, discrete sample space) to toric varieties. The second part is to apply the theorem of Batyrev--Tschinkel in this context. 

\subsection{Exponential statistical manifolds for discrete sample space}

A statistical variety (or manifold) can be considered as the  parametrized family of probability distributions $S=\{p(x;\theta)\}$, where $p(x;\theta) =\frac{dP_\theta}{d\mu}$ is the Radon--Nikodym derivative of $P_\theta$ w.r.t. the $\sigma$-finite measure $\mu$ (and it is positive $\mu$-almost everywhere). 
It comes equipped with the following ingredients:
\begin{itemize}
\item the canonical parameters: $\theta= (\theta^1,\dots, \theta^n)\in \R^n$;
\item the symbol $x$ referring to a family of random variables $\{x_i\}$ on a sample space $\Omega$;  
\item $p(x;\theta)$ is the probability distribution parametrized by $\theta$.
\end{itemize}
  
A family $S=\{p(x;\theta)\}$ of distributions is an {\it exponential family} if the density functions can be written in the following way:   
  \[p(x;\theta)= \exp(\theta^ix_i-\Psi(\theta)),\] where   
  \begin{itemize}
  \item $\Psi(\theta)$ is a potential function, which is given by $\Psi(\theta)=\log\int_{\Omega}\exp\{\theta^ix_i\}d\mu$. 
  \item the parameter $\theta$ and  $x=(x_i)_{i\in I}$ (where $I$ is a finite set) have been chosen adequately;
\item the  canonical parameter satisfies $\Theta:=\{\theta\in \R^d: \Psi(\theta)<\infty\}$.
\end{itemize}

 Whenever $p(x;\theta)$ is smooth enough in $\theta$ one can include in the statistical model a structure of an $n$-dimensional manifold. We use the construction of the family $A$ as a manifold, using the atlas $\{U_i,\phi_i\}_{i\in I}$.

From now on suppose that the sample space is finite and discrete, i.e. $\Omega=\{ \omega_1,\cdots,\omega_m\}$. 
A small change of notation is required for practical reasons. This leads us to consider the exponential family of probability distribution defined by:
\begin{equation}\label{E:exp1}
p(q;\theta) = p_0(\omega) \exp \{\theta^i q_i ( \omega) - \Psi ( \theta) \},\, \text{where}\quad p_{0}(\omega)> 0
\end{equation}
and with canonical distribution parameter $\theta=(\theta^1,\cdots,\theta^n) \in \R^n$. Furthermore, we have $\omega \in \Omega$ elements of the sample space and $q_i :\Omega \to \R$ are a family $q=\{q_i\}$ of random variables; $\Psi (\theta)$ is a cumulant generating function.
The  $q_i(\omega), i \in I $ ($I$ is some list of indices) is a function defining directions of the coordinate axes, called statistics (or {\it directional sufficient statistics}). 

\begin{prop}\label{P:tor}
The exponential statistical manifolds (defined as above and over a discrete and finite sample space) have the structure of a real toric variety.
\end{prop}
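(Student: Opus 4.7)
The plan is to exhibit an explicit monomial parametrisation of the exponential family and recognise the image as (the real points of) a projective toric variety built from the configuration of sufficient statistics. Since $\Omega=\{\omega_1,\dots,\omega_m\}$ is finite, every distribution $p(q;\theta)\in S$ is recorded by the vector $(p(q;\theta)(\omega_1),\dots,p(q;\theta)(\omega_m))\in\R_{>0}^m$ lying in the open probability simplex $\Delta^{m-1}$, and formula (\ref{E:exp1}) gives componentwise
\[p(q;\theta)(\omega_j)=\frac{p_0(\omega_j)\exp\{\theta^i q_i(\omega_j)\}}{\exp\{\Psi(\theta)\}}.\]

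First I would perform the change of variables $t_i=\exp(\theta^i)$, which identifies the parameter space $\R^n\ni\theta$ with the positive real torus $(\R_{>0})^n$ and converts the exponential parametrisation into a monomial one:
\[p(q;t)(\omega_j)=\frac{p_0(\omega_j)\,t^{q(\omega_j)}}{Z(t)},\qquad t^{q(\omega_j)}:=t_1^{q_1(\omega_j)}\cdots t_n^{q_n(\omega_j)},\]
with normalising Laurent polynomial $Z(t)=\sum_j p_0(\omega_j)\,t^{q(\omega_j)}$. Passing to homogeneous coordinates kills the common denominator $Z(t)$ and yields the map
\[\varphi\colon T\longrightarrow \bbP^{m-1},\qquad t\longmapsto [\,p_0(\omega_1)\,t^{q(\omega_1)}:\cdots:p_0(\omega_m)\,t^{q(\omega_m)}\,].\]
This is the standard monomial map attached to the lattice point configuration $A=(q(\omega_1),\dots,q(\omega_m))$ whose columns are the values of the sufficient statistics.

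Secondly, I would invoke the classical construction: for any finite configuration $A\subset \Z^n$, the Zariski closure $X_A:=\overline{\varphi(T)}\subset\bbP^{m-1}$ is a projective toric variety, and the algebraic torus $T=(\R^*)^n$ acts on it with a dense open orbit isomorphic to $T/\ker\varphi$. The affine charts of $X_A$ are the spectra of the semigroup algebras $\R[\Z_{\geq 0}\langle q(\omega_j)-q(\omega_{j_0})\rangle]$, so the real-toric structure is intrinsic and compatible with the exponential coordinates. The positivity condition $p_0(\omega)>0$ and the Radon--Nikodym normalisation select the positive real locus $X_A(\R)_{>0}\subset X_A(\R)$, which is precisely the image of the exponential chart and hence coincides with $S$ as a manifold. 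A dimension count (the rank of the matrix $A$ modulo the all--ones row) gives the dimension of the toric variety and matches the effective dimension of the exponential family.

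The main obstacle is of an arithmetic nature: for $X_A$ to be an honest $\Q$-variety (and hence admit the $\Q$-torus $T=(\Q^*)^m$ required by Theorem~\ref{T:2}), the sufficient statistics $q_i(\omega_j)$ must be rational. In the discrete setting this is the standard and practically universal hypothesis --- log-linear / graphical models have integer-valued statistics --- but it needs to be stated explicitly; after clearing a common denominator one obtains integer exponents and an integer configuration $A\subset\Z^n$. A second, minor subtlety is distinguishing the positive real torus $(\R_{>0})^n$ coming from the exponential change of variables from the full algebraic torus $(\R^*)^n$ (resp. $(\Q^*)^n$); this is resolved by observing that $\varphi$ extends algebraically to the full torus and that $X_A$ is defined as the Zariski closure, so that $S$ sits as a semi-algebraic connected component inside the set of real points of the toric variety $X_A$.
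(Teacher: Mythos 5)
Your proposal follows essentially the same route as the paper: the substitution $t_i=\exp(\theta^i)$ converts the exponential parametrisation into a monomial one governed by the integer-valued configuration of sufficient statistics, and the standard toric construction (which the paper phrases via the toric ideal $\ker\hat{\pi}$ and the affine variety $V(\cI_{T})$, while you phrase it via the Zariski closure of the monomial map to $\bbP^{m-1}$ --- two equivalent presentations of the same object) then yields the real toric structure. Your explicit attention to the rationality/integrality of the $q_i(\omega_j)$, to the normalising factor $Z(t)$, and to the distinction between the positive real torus and the full algebraic torus makes welcome some hypotheses the paper leaves implicit, but the underlying argument is the same.
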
 We present the construction below. 

\smallskip 
\begin{proof}
$\bullet$ Let us define $\cQ=\{\bf{q}_1,\cdots,\bf{q}_n\}$, where ${\bf q_i}=(q_{i1},\cdots,q_{im})^T \subset \Z^m$ and the components satisfy $q_{ij}:=q_i(\omega_j)$, with $\omega_j \in \Omega$. The matrix $\cQ$ has size $m\times n$ and its components are integers. Columns are given by the set $\{{\bf q_1},\cdots, {\bf q_n}\}$. The matrix $\cQ=[q_{ij}]_{i=1,\cdots, n; \, j=1,\cdots,m}$, where $q_{ij} = q_i(\omega_j)$ and $q_0(\omega_j)=1$ give the {\it directional statistics.}

\smallskip 

$\bullet$ Put $t_i=e^{\theta^i}\in \R_>$. The monomial is then $t_i^{q_i(\omega_j)}=\exp{\{\theta_iq_i(\omega_j)\}}$ and one can write the following equation:
\[
\exp\left\{\sum_{i=1}^n \theta^iq_{ij}\right\}=\prod_{i=1}^n t_i^{q_{ij}}=\boldsymbol{\tau_j}
\]

So, to conclude, we have that $p(q;t)$ can be rewritten as the product $t_1^{{\bf q}_1}\cdots t_n^{{\bf q}_n}$. 

Moreover, since we assumed that $q_{ij}$ are integers, $\boldsymbol{\tau_j}$ (for $j=1,\cdots,m$) form Laurent polynomials in $t_i$. 
Therefore, each vector ${\bf q_i}$ is identified to a monomial $t^{\bf q_i}$ in the Laurent polynomial ring $\Q[t^{\pm}]$, where $\Q[t^{\pm}]:=\Q[t_1,\cdots,t_m,t_1^{-1},\cdots,t_m^{-1}]$.

Statistically speaking, the monomial $t_i^{q_i(\omega_j)}$ can be interpreted as the probability of having the canonical parameter $\theta_i$ in the direction of $q_i(\omega_j)$ for the event $\omega_j.$ Whereas, the $m$-tuple $(t_1,\cdots,t_m)\in(\Q^*)^m$, where $t_i=\exp{\theta^i}$ form the {\it probability coordinates}.

\smallskip 

Going back to the classical construction of the toric ideal, we apply the following. 
Take the (semigroup) homomorphism:
 \[\pi: \N^n\to \Z^m,\quad \bu=(u_1,\cdots,u_n)\mapsto\sum_{i=1}^n u_i{\bf q_i}.\] 
 The image of $\pi$ is the semigroup:
 \[\N\cQ=\{\lambda_1{\bf q_1}+\cdots \lambda_n{\bf q_n}\, :\, \lambda_1,\cdots, \lambda_n\in \N\}.\]

This map $\pi$ lifts to a homomorphism of semigroup algebras:
\[\hat{\pi}: \Q[{\bf y}]\to \Q[t^{\pm1}],\quad y_i\mapsto t^{\bf q_i},\]
where $\Q[{\bf y}]$ is a polynomial ring in the variables ${\bf y}:=(y_1,\cdots, y_n)$.

It is the kernel of the homomorphism $\hat{\pi}$ that generates the toric ideal $\cI_{T}$ of $\cQ$. The multiplicative group $(\Q^*)^m$ is known as the $m$-dimensional algebraic torus. The variety of the form $V(\cI_{T})$ is the affine toric variety. So, we have shown the existence of an $m$-dimensional algebraic torus for the exponential statistical manifolds. This algebraic torus is given by the {\it probability coordinates} $(t_1,\cdots,t_m)\in(\Q^*)^m$, where $t_i=\exp{\theta^i}$.

Note that for $dim(\cQ)=m$, one can visualise the dense torus using the fact that the set $V(\cI_{T})\cap (\overline{\Q}^*)^m$ is an algebraic group under coordinate-wise multiplication which is isomorphic to the $m$-dimensional torus $T=(\overline{\Q}^*)^m$.

To each point $P\in U_i$, where $(U_i,\phi_i)$ is a chart, we apply the homomorphism construction above. The coordinate functions $y_i$ on the chart $U_i$ can be expressed as Laurent monomials in the adequate coordinates. In changing from one chart to another the coordinate transformation remains monomial. So, this forms a smooth toric variety, where we have a collection of charts $y_i: U_i\to\Q^n$, such that on the intersections of $U_i$ with $U_j$ the coordinates $y_i$ must be Laurent monomials in $y_j$.

A toric variety with a collection of charts determines a system of cones $\{\sigma_a \}$ in $\R^n$. Putting coordinates $x_1,\cdots x_n$ on a given fixed chart $U_0$ the coordinate functions $x^{(a)}$ on the remaining charts $U_a$ can be represented as Laurent monomials in $x_1,\cdots x_n$. 

Furthermore, if we have a regular function $f$ on $U_a$, then it can be represented as a Laurent polynomial in $x_1,\cdots, x_n$.  The regularity condition for a function $f$ on the chart $U_a$ can be expressed in terms of the support of the corresponding Laurent polynomial $\tilde{f}.$ For $\tilde{f}=\sum_{m\in \Z}c_mx^m$ the support of $\tilde{f}$ is the set $\{m\in \Z^n \, |\, c_m\neq 0\}$ and with each chart $U_a$, we associate a cone $\sigma_a$ generated by the exponents of $x_1^{(a)},\cdots, x_n^{(a)}$ as Laurent polynomials in $x_1,\cdots x_n$.

An arbitrary Laurent polynomial $\tilde{f}$ is regarded as a rational function on $X$. Regularity of this function on the chart $U_a$ is equivalent to $supp(\tilde{f})\subset \sigma_a$. Thus, various questions on the rational function $\tilde{f}$ on the toric variety $X$ reduces to the combinatorics of the positioning of $supp(\tilde{f})$ with respect to the system of cones $\{\sigma_a \}$.  

Reciprocally, one can construct a toric variety by specifying a system of cones  $\{\sigma_a \}$ satisfying certain properties. These requirements can be most conveniently stated in terms of the system of dual cones and leads to the notion of fan.
\end{proof}
\begin{cor} 
Consider the exponential statistical variety defined for a discrete finite sample space.    
If we have a regular function $f$ on $U_a$, then it can be represented as a Laurent polynomial in $x_1,\cdots, x_n$.  The regularity condition for a function $f$ on the chart $U_a$ can be expressed in terms of the support of the corresponding Laurent polynomial $\tilde{f}$ and in the exponential variety it is given by the directional statistics. In particular, the cone is generated by the directional statistics. 
\end{cor}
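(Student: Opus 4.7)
The plan is to deduce this corollary directly from Proposition~\ref{P:tor} by tracing through the chart-by-chart description of the toric structure and identifying, at each step, the combinatorial data in terms of the directional statistics $q_i(\omega_j)$. The target identification is: on a chart $U_a$, the coordinate functions are precisely the Laurent monomials $t^{\bf q_i}$ produced in the proof of Proposition~\ref{P:tor}, so the cone $\sigma_a$ attached to that chart is the $\R_{\geq 0}$-span of the corresponding vectors ${\bf q_i}\in \Z^m$, i.e.\ of the directional statistics.

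First I would recall from the construction that the coordinate ring of the affine chart $U_a$ is the image of the semigroup algebra map $\hat\pi\colon \Q[y_1,\ldots,y_n]\to \Q[t^{\pm 1}]$, $y_i\mapsto t^{\bf q_i}$. Hence every regular function $f$ on $U_a$ is, by definition, a polynomial expression in the $y_i$, and pulling back via $\hat\pi$ realises it as a Laurent polynomial $\tilde f\in \Q[t^{\pm 1}]$ whose monomials are of the form $t^{\sum u_i {\bf q_i}}$ with $u_i\in \N$. This is the content of the first sentence of the corollary.

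Next I would extract the support condition. By the previous step, $\tilde f$ has the shape $\sum_{\bu} c_{\bu}\, t^{\pi(\bu)}$ with $\bu\in\N^n$, so $\operatorname{supp}(\tilde f)\subset \N{\cQ}=\{\sum u_i {\bf q_i}\,:\, u_i\in\N\}$, which in turn lies in the rational polyhedral cone $\sigma_a:=\R_{\geq 0}\langle {\bf q_1},\ldots, {\bf q_n}\rangle$. Conversely, any Laurent polynomial whose support lies in the semigroup $\N\cQ$ can be written as a polynomial in the $t^{\bf q_i}$ and hence comes from a regular function on $U_a$. This is the standard saturation argument for affine toric varieties and it requires only that the ${\bf q_i}$ are integral, which is part of the hypothesis (the directional statistics $q_{ij}=q_i(\omega_j)$ are integers by construction). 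Thus regularity of $f$ on $U_a$ is equivalent to $\operatorname{supp}(\tilde f)\subset \sigma_a$, and the generating rays of $\sigma_a$ are exactly the directional statistics vectors ${\bf q_i}$.

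The only subtlety I anticipate is the translation between the abstract cone $\sigma_a$ attached to the chart $U_a$ in the generic toric picture and the explicit cone generated by $\{{\bf q_1},\ldots,{\bf q_n}\}$ coming from the statistical data; this is the step where one must be careful that the chart used in Proposition~\ref{P:tor} is indeed the affine toric chart $\operatorname{Spec}\Q[\sigma_a^{\vee}\cap \Z^n]$ attached to that cone, so that the combinatorial support criterion matches. Once the identification $y_i\leftrightarrow t^{\bf q_i}$ is pinned down, the rest of the argument is formal, and the conclusion that $\sigma_a$ is generated by the directional statistics follows at once from the fact that the exponents appearing in the coordinate monomials of $U_a$ are precisely the vectors ${\bf q_i}$.
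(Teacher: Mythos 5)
Your proposal is correct and follows essentially the same route as the paper: the paper's own proof is the single line ``This follows from the discussion above,'' referring to the chart-by-chart toric description in Proposition~\ref{P:tor}, and your argument is a faithful unpacking of exactly that discussion (identifying the coordinate ring of $U_a$ with the image of $\hat\pi$, placing $\operatorname{supp}(\tilde f)$ in $\N\cQ$, and reading off the cone as the span of the ${\bf q_i}$). The only point where you are more careful than the paper is the saturation issue in passing from the semigroup $\N\cQ$ to the cone $\sigma_a$, which the paper glosses over entirely.
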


\begin{proof}
This follows from the discussion above. 
\end{proof}

Now, we argue that the Manin conjecture holds for exponential statistical manifolds. Indeed, following the construction of Batyrev--Tschinkel~\cite{BaT}, the Manin conjecture is true for toric varieties. From the above statement (Prop. \ref{P:tor}) it follows that the exponential statistical manifolds (defined over finite sample space) have the structure of a (real) toric variety. Therefore, the conclusion follows easily that a smooth projectivised version of the exponential statistical manifolds defined over finite and discrete sample space satisfies the Manin conjecture.

\smallskip

\section{Words, codes and algebraic structures in information transmission}\label{S:3}
\subsection{Motivation}
As was shown in previous works, Moufang loops and quasigroups are central in information geometry.
We focus on the situation of coding or of error making during a transmission of a given information. It turns out that the algebraic structures of loops and quasigroups offers the right language and formalism to deal with this type of problems. This is starting to be developed in Sec. 2.3 and the following sections. We recall below some results relating structures codes and non-necessarily commutative Moufang loops and quaisgroups.

Commutative Moufang Loops appear in the symmetries of the space of probabilities: {\it automorphisms of order two} that are boundary limits of the reflections of geodesics about the center, come equipped with a structure of a quasigroup. These {\it automorphisms} define a composition law on the set of points that forms an {\it abelian quasigroup}. 

Similarly, non-necessarily commutative Moufang loops and quasigroups appear among the other aspect of information geometry, regrouping around codes/ structure codes (see~\cite{Mouf,QuOp}). We will mention a few results in relation to this in what follows.  

\smallskip 

Based on the works in \cite{Err}, family of codes are defined as follows. We choose and fix an integer $q \geq 2$ and a finite set: the alphabet $A$ of cardinality $q$.  An (unstructured) {\it code $C$} is defined as a nonempty subset $C \subset S$ of words of length $n\geq 1$. 
The sequence $w=(\alpha_i)$ of elements of $A$, where $i = 1,2,...,n$ is called a word $w$ of length $n$. We denote by $n(C)$ the common length of all words in $C$. Such a subset $C$ comes equipped with its code point datum. This is given by a pair $P_C=({\rm R}(C), \delta(C))$, where 
${\rm R}(C)$ is called the {\it transmission rate} and $\delta(C)$ is the {\it relative minimal distance} of the code. 

\begin{itemize}
\item The relative minimal distance of the code $\delta(C)$ is given by the quotient 
$\delta(C):=\frac{d(C)}{n(C)},$ where
 $d(C)=\min\{d(a,b)\, |\, a,b \in C, a\neq b\}$ is the minimal distance between two different words in $C$; $n(C) := n$ and $d(a,b)$ is the Hamming distance between two words: 
$d((\alpha_i),(\alpha'_i)):= card\{i \in (1,\cdots, n)\,  | \, \alpha_i \neq \alpha'_i\};$
\item The transmission rate  ${\rm R}(C)$ depends on the $log_{q}(Card(C))$ i.e. we have: ${\rm R}(C)=\frac{[log_{q}(Card(C))]}{n(C)}$.
\end{itemize}
Note that for our investigations the code point $P_C=({\rm R}(C), \delta(C))$ will not be directly considered, although it is implicitly present. 

As mentioned in earlier works of \cite{Mouf} (section 5.2), Moufang symmetries generally become visible in the so-called {\it structured codes.}  The most studied structure codes appear in linear codes and algebraic-geometric codes. Concerning the former (linear codes) one considers the alphabet $S:= \F_q,$ corresponding to generators of a finite field of cardinality $q$, and $C \subset \F_{nq}$ form $\F_q$-linear subspaces. Concerning the latter (algebraic-geometric codes) one has the same class of alphabets, but the difference is that one considers
$\F_q$-points in an affine (or projective) $\F_q$-scheme with a chosen coordinate system. 

Moufang symmetries appear indirectly in this geometric setting. Their existence can be seen using various and different formalisms, motivated for instance by theoretical physics. Let us recall some definitions on loops and quasigroups. 

\subsection{Quasigroups and Moufang symmetries}

For the convenience of the reader, we recall below the algebraic structures of quasigroups, loops, Moufang loops. 

\begin{enumerate}
\item Let $A$ be a finite set of cardinality $q$. A {\it binary operation} on a set $A$ is a mapping $\diamond:A\times A\to A$ which associates to every ordered pair $(a,b)$ of elements in $A$ a unique element $a\diamond b$. A set with a binary operation is called a {\it magma}. 

\item A quasigroup is a magma (i.e. a set $A$ with a binary multiplication denoted by $\diamond$) such that in the equation $x\diamond y=z$ the knowledge of any two of $x, y, z$ specifies uniquely the third. Latin squares form the multiplication tables of quasigroups. 

\item Based upon the set $A$ a Latin square is a $|A| \times |A|$ array in which each element of $A$ occurs exactly once in each row and exactly once in each column. In particular, for all ordered pairs $(a,b)\in A^2$ there exist unique solutions $x,y \in A$ to the equations: 
 $x\diamond a=b,\quad a\diamond y=b$, and those solutions are precisely given by the Latin squares. 
\,
Differently speaking, for each element $r$ of a magma $(A, \diamond)$ one can define the left multiplication: 

\[L(r)=L_x(r):A\to A, x\mapsto r \diamond x\]

and the right multiplication:

\[R(r)=R_x(r):A\to A, x\mapsto x\diamond r.\]

The operators $L$ and $R$ form bijections of the underlying set $A$. We call them left (resp. right) {\it translation} maps.  
In particular, this allows to reformulate the definition of the quasigroup, using the translation maps so that a magma is a combinatorial quasigroup iff the left multiplication $L(r)$ and the right multiplication $R(r)$ are bijective for each element $r$ of $A$.

We can add to this structure the possibility of having a unit denoted $e$ i.e. such that $e\diamond a = a\diamond e = a$ holds for any element $a\in S$. 

\item A quasigroup $(A, \diamond)$ is a nonempty set $A$ equipped with a binary multiplication $\diamond: A \times A \to A$
 and such that, for each $a \in S$, the right and left translation maps $R(a): A \to A$ and $L(a): A \to A$ given by
$R(a) =r\diamond a$ and $L(a) =a\diamond r$, are permutations of $A$. 
If there is a two-sided identity element $1_A = 1_{(A,\diamond)}$ then $A$ is a loop.

\item A  loop is a called Moufang if it is a unital quasigroup (it has a unit and every element is invertible) with a near associativity relation: 
\[(a\diamond b)\diamond (c\diamond d)=a\diamond ((b\diamond c) d),\]
\[a\diamond (a \diamond b) = (a \diamond a) \diamond b,\quad  (a \diamond b) \circ (a \diamond c) = (a \diamond a) \diamond (b \diamond c).\]
where $(a,b,c,d)\in S^4$.
\end{enumerate}

\smallskip 

Going back to our previous discussion on code loops, the loop $\cL$ is, roughly speaking, given by the sequence:
\[0\to R\to \cL \to C\to 0,\]
where $R$ is a ring (which will be more precisely defined below); $C\subset \mathbb{F}^n_{2^r}$ is a linear code, equipped with an additional structure which is introduced in the next paragraph: the ``almost-symplectic structure''. In order to give a flavour to the reader we recall this notion and expose how the loops appear in more details. For further information we refer to~\cite{sem}.

\smallskip 

An almost symplectic structure on a finite dimensional vector space $V$ over $\mathbb{F}_q$ ($q$ odd) is a non-degenerate skew-symmetric form $\omega: V\times V \to \mathbb{F}_q$ where $\omega$ satisfies the
anti-symmetry $\omega(u,v) = -\omega(v,u)$, with $\omega(u,0) = \omega(0,u) = 0$, 
and for any non-null element $u$ in $V$ there exists some $v\in V$ satisfying $\omega(u,v)\neq0$.
A polarisation of the almost-symplectic form is a function $\beta : V \times V\to R$ satisfying the relation
\[\beta(u,v)-\beta(v,u)=\omega(u,v).\]

\smallskip 

Consider the finite field $\mathbb{F}_{2^r}$ and identify it to the residue field: $\cO_K/{\bf m}_K$,  
where $K$ is an unramified extension of degree $r$ of ${\bf Q}_2$; $\cO_K$ is the ring of integers and ${\bf m}_K$ the maximal ideal. 
The ring $R$ in the above short exact sequence is given by $R=\cO_K/{\bf m}^2_K$.

The construction of the almost-symplectic code loop $\cL(V, \beta)$ over $\mathbb{F}_q$ where $q=2^r$ is an extension given by the short exact sequence:
\[0\to R\to \cL(V,\beta) \to V\to 0,\]
where $(V,\beta)$ is an almost-symplectic vector space $(V,\omega)$ with polarization $\beta$ over $\mathbb{F}^n_{2^r}$.

\smallskip 

This setup motivates our investigations concerning codes and error-codes. In particular, we give a construction allowing to take into account all possible errors (or error corrections) occurring during the transmission of some information. The framework of quasigroups and loops fits adequately this type of problem. 

\subsection{Words, codes and algebraic structures}
The algebraic structure of spaces of words and codes are interesting to study. As soon as one associates to words of the code $C$ some given meaning, a code $C$ forms a type of dictionary for a given language. A finite combination of code words form sentences in this language. However, it can happen that given an information encoded by such a sentence it might be distorted during the transmission and so mistakes may appear in the receivers message, changing thus its meaning.

The types of mistakes that can possibly occur are listed below:
\begin{enumerate}
\item letters in a word can be permuted, 
\item one letter can be replaced by another letter (we say that this letter has been translated or shifted to another one),
\item new letters can be added to the word,
\item letters can be lost in the word,
\item new words can be added to the preexisting word. 
\end{enumerate}

In the following part of this section we consider the first two types of mistakes. We argue that quasigroups and loops offer the perfect setting to define these types of operations.
Mistakes of type (3), (4), (5) are considered in the next section where the notion of modified parenthesised braids is introduced.  

\smallskip 
 \begin{ex}
 Consider the alphabet $A_4=\{a,b,c,d\}$ and suppose the Latin square associated is as follows.
 \begin{center}
\renewcommand\arraystretch{1.3}
\setlength\doublerulesep{0pt}
\begin{tabular}{r||*{4}{2|}}
$\cdot$ & a & b & c & d  \\
\hline\hline
$a$ & b & c & d & a \\ 
\hline
 $b$ & c & d & a & b \\ 
\hline
$ c$ & d & a & b & c \\ 
\hline
 $d$ & a & b &c & d \\ 
\hline
\end{tabular}
\end{center}
Then the word $w=(c(ab)d)$ can be distorted using the translation maps as $(L_a(c)(aR_b(b))d)$ and the receiver reads $(d(ad)d)$. 
 \end{ex}
We do not assume commutativity (unless it is clearly stated) i.e. the word {\it bac} is not equivalent to {\it cab}. When it comes to   parenthesised words, associativity is not allowed either, so that $b(ac)$ is not equivalent to $(ba)c$.

\smallskip 

We now introduce the following notations and explicit the corresponding notions. 
\begin{itemize}
\item Consider an alphabet $A$ (finite set of cardinality $n$).
\item By $M_{p}(A)$ we denote the  parenthesised $p$-words formed from the alphabet $A$. Repetitions of letters are allowed. 
\item $M_n(A)$: sum of $M_p(A)\times M_{n-p}(A)$, where $1\leq p\leq n-1$. A word $w\in M_n(A)$ can be written as the concatenation of two smaller words, strictly contained in between an open and a closed parenthesis i.e. we have $w=(w')\circ (w'')$ where $w'$ is of length $p$ and $w''$ of length $n-p$. We call those subwords the {\it blocks} of $w$. 

\item The sum of the family $(M_n(A))_{n\geq 1}$ is denoted $M(A)$.
\item  $M(A)$: the free magma, with composition law $w,w' \mapsto w\circ w'$. 
\item $\mathbb{M}_k(A)$ is used only for  parenthesised $k$-words with {\it distinct letters}. Note that for this notation to be consistent it is necessary that $k\leq n$. In particular, $\mathbb{M}_n(A)$ are the  parenthesised $n$-words with $n$ distinct letters. 
\end{itemize}
There is a clear separation of $M_{n}(A)$ into two subclasses made of those words with distinct letters $\mathbb{M}_n(A)$ and those words with repeating letters.  
\begin{rem}
Concerning the last class of parenthesised words, if we take for example $A=\{a,b,c,d\}$, then an element of $\mathbb{M}_{5}A$ forms a word where letters repeat: the expression $(a(bd))(ba)$ represents an element of $\mathbb{M}_{5}A$. 
\end{rem}

\smallskip 

We now consider the connection between the free magma structure $M(A)$ and the magma $(A,\circ)$ on which the quasigroup acts $GQ=(A, \diamond, L,R)$. Suppose for simplicity that $A$ has cardinality $n$.
Then we have an action on the sequence of letters forming a word $w=(x_1\cdots x_n)$ that we can write as an $n$-tuple i.e. $(x_1,\cdots, x_n)$ such that each entry (letter) of the n-tuple is translated by a left $L$ or right map $R$. To avoid any source of confusion we denote the translation of all letters of the word as $T$. So, we have the following:   
\[QG\times M_n(A)\to M_n(A)\]
 \[(T,  (x_1,\cdots, x_n))\to (T(x_1),\cdots, T( x_n))\]

Recall the construction of $M(A)$. For any $w\in M(A)$, there exists a unique $n\geq 1$ such that $w\in M_n(A).$
For a given pair of words $(w,w')\in  M_p(A)\times M_{q}(A)$ of length $p$ and $q$ respectively we can define a product $w\circ w'$ forming an element of $M_{p+q}(A)$. The set $M(A)$ with the law composition $w,w' \mapsto w\circ w'$ is the free magma. So, we can state the following lemma.

\begin{lem}\label{L:bki}
 Let $A_n$ be a finite set of cardinality $n$. Consider the magma $\fN:=(A_n,\circ)$ and suppose that there exists a quasigroup $QG_n=(A_n, \diamond, R, L)$ acting on the letters of the words of length $n$. 
Then, there exists a unique morphism $g:M(A)\to \fN$ from the free magma on $A_n$ to $\fN$.
\end{lem}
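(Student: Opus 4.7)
The plan is to invoke the universal property of the free magma: any function from the set $A_n$ into the underlying set of a magma extends uniquely to a magma morphism on $M(A)$. In our situation the target magma is $\fN=(A_n,\circ)$, whose binary operation is the one induced by the quasigroup structure, and the natural choice on generators is the identity map $A_n\to A_n$ (though, more generally, any set map $A_n\to\fN$ works verbatim in what follows).

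First I would define $g$ on the generators $M_1(A)=A_n$ by $g(a):=a$, and then extend $g$ inductively along the length filtration of $M(A)$. By the construction of the free magma as $\bigsqcup_{n\geq 1}M_n(A)$ together with the decomposition $M_n(A)=\bigsqcup_{p=1}^{n-1}M_p(A)\times M_{n-p}(A)$, every word $w\in M_n(A)$ with $n\geq 2$ admits a \emph{unique} factorisation $w=w'\circ w''$ with $w'\in M_p(A)$ and $w''\in M_{n-p}(A)$ for a uniquely determined $p$; the outermost matching pair of parentheses records this decomposition. Assuming $g$ has already been defined on all words of length strictly less than $n$, I would set
\[g(w)\;:=\;g(w')\circ g(w''),\]
where the $\circ$ on the right denotes the magma law in $\fN$. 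By construction the resulting map intertwines the two composition laws and is therefore a magma morphism.

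Uniqueness is then an immediate recursion: if $h:M(A)\to\fN$ is any magma morphism with $h(a)=g(a)$ for every $a\in A_n$, then using the morphism property $h(w'\circ w'')=h(w')\circ h(w'')$ together with the unique factorisation above, an induction on word length forces $h=g$ on every $M_n(A)$.

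The only point that really requires care is the unique-factorisation property for words of length $\geq 2$; this, however, is precisely the combinatorial content of freeness and is built into the inductive definition of $M(A)$. I therefore do not anticipate a genuine obstacle: the lemma is a direct instance of the universal property of the free magma, and the finer quasigroup axioms on $QG_n=(A_n,\diamond,R,L)$ are not used --- only the underlying magma structure they provide on the target $\fN$.
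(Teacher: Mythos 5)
Your proposal is correct and follows essentially the same route as the paper's own proof: fix a map on the generators $M_1(A)=A_n$ (the paper takes a bijection $f:A\to\fN$, you take the identity), extend inductively via the decomposition $M_n(A)=\bigsqcup_{p}M_p(A)\times M_{n-p}(A)$ by setting $g(w'\circ w'')=g(w')\circ g(w'')$, and deduce uniqueness by the same induction. Your added remark that the unique outermost factorisation is what makes the recursion well defined is a useful point of care that the paper leaves implicit.
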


\begin{proof}
A quasigroup is a magma $\fN$ where every element is invertible. Let us define the following bijection $f: A \to \fN$. 
By induction we can construct the morphism $g$ as follows. 
\begin{itemize}
\item Let $f_1=f:M_1(A)\to \fN$, where $M_1(A)=A$. 
\item For $n\geq 2$, we have $f_n:M_n(A)\to \fN$ and given $w\circ w' \in M_p(A)\times M_{n-p}(A)$, 
$f_n(w\circ w')=f_p(w)\circ f_{n-p}(w')$.
\end{itemize}
There exists only one unique morphism $g$ inducing $f_n$ on $M_n(A)$ for all $n\geq 1$. So, $g$ is the unique morphism of $M(A)$ into $\fN$ which extends $f$. 
\end{proof}

\begin{lem}
Consider the quasigroup $(A,\diamond,L,R)$ acting on $M_n(A)$, $n\geq 2$. Then, any permutation of the letters of a word  $w\in M_n(A)$ can be recovered by an adequate combination and choice of translation maps $L$ and $R$.
\end{lem}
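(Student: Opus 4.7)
The strategy is to realise the desired permutation positionwise: for each of the $n$ slots of $w=(x_1,\ldots,x_n)$, I would exhibit a single translation from the family $\{L_a, R_a \mid a\in A\}$ which, applied at that slot, replaces $x_i$ by the required letter $x_{\sigma(i)}$, while the surrounding parenthesis structure is left untouched. The example $(c(ab)d) \mapsto (L_a(c)(aR_b(b))d)$ displayed just above the lemma is exactly the prototype of such a positionwise substitution, so the task reduces to confirming that this mechanism is rich enough to generate the entire action of $S_n$ on the positions of $w$.

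\textbf{Key step.} Given $\sigma \in S_n$ and a position $i\in\{1,\dots,n\}$, I need some $L_a$ or $R_b$ taking $x_i$ to $x_{\sigma(i)}$. This is precisely what the defining axiom of a quasigroup recalled in item~(2) of the text provides: in the equation $x \diamond y = z$, knowledge of any two of $x,y,z$ determines the third uniquely. Specialising $y=x_i$ and $z=x_{\sigma(i)}$ yields a unique $a_i\in A$ with $a_i \diamond x_i = x_{\sigma(i)}$, that is, $L_{a_i}(x_i) = x_{\sigma(i)}$; the dual specialisation gives a unique $b_i$ with $R_{b_i}(x_i) = x_{\sigma(i)}$. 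Performing this at every position, the tuple $(L_{a_1},\ldots,L_{a_n})$ applied positionwise transports $w$ to the target word $w_\sigma := (x_{\sigma(1)}, \ldots, x_{\sigma(n)})$.

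\textbf{Compatibility with the free magma structure.} Each $L_{a_i}$ acts only on the letter sitting in position $i$ and not on the block decomposition inherited from the free magma $M(A)$. Therefore $w_\sigma$ retains the same parenthesisation as $w$; in particular it still lies in $M_n(A)$ with the identical decomposition through $M_p(A)\times M_{n-p}(A)$, so the positionwise action is well-defined on $M_n(A)$. This would be stated explicitly to confirm that the letter-level argument is compatible with the magma structure used earlier in Lemma~\ref{L:bki}.

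\textbf{Main obstacle.} The only delicate point is interpretative: one must reconcile the uniform action $(T,(x_1,\ldots,x_n)) \mapsto (T(x_1),\ldots,T(x_n))$ written in the display equation preceding Lemma~\ref{L:bki} with the position-dependent action illustrated by the example. A single uniform $T$ is clearly insufficient in general --- if $x_i = x_j$ while $x_{\sigma(i)} \neq x_{\sigma(j)}$, no map $T$ can simultaneously satisfy $T(x_i) = x_{\sigma(i)}$ and $T(x_j) = x_{\sigma(j)}$. The word ``combination'' in the statement of the lemma forces the looser, positionwise interpretation consistent with the example, and once this reading is adopted the proof reduces, as above, to $n$ independent invocations of the quasigroup axiom.
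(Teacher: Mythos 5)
Your proof is correct, but it is organised differently from the paper's. The paper first decomposes the permutation $\sigma$ into transpositions and then realises each transposition of a pair of letters $a,b$ by a pair of translations $L_x, R_y$ subject to the compatibility condition $x\diamond a=b$ and $b\diamond y=a$ (equation \eqref{E:equation}); you instead realise $\sigma$ in a single positionwise pass, solving $a_i\diamond x_i=x_{\sigma(i)}$ at each slot and using only left translations. Both arguments rest on exactly the same quasigroup axiom (unique solvability of $x\diamond y=z$ in each argument), so the mathematical content is equivalent; your version is more economical, needs no decomposition into transpositions, and handles repeated letters transparently. What the paper's formulation buys is the explicit condition \eqref{E:equation}, which is not incidental: it is reused verbatim in the subsequent corollary on the magma operad and in the definition of intertwining for the modified braids, so a transposition-by-transposition bookkeeping is what the later sections actually consume. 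Your observation that a single uniform $T$ acting as $(T(x_1),\ldots,T(x_n))$ cannot realise a general permutation when letters repeat is a genuine and worthwhile clarification that the paper leaves implicit --- the displayed action before Lemma \ref{L:bki} is indeed only consistent with the lemma under the position-dependent reading that both you and the paper's own example adopt.
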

\begin{proof}
Any permutation can be obtained by a product of transpositions. Now, the operators $L_x$ and $R_y$ define a transposition iff 
\begin{equation}\label{E:equation}x\diamond a=b\quad \text{and}\quad b\diamond y=a,\end{equation}
where $x,y \in A$. 
So, given a quasigroup $(A,\diamond,L,R)$ any permutation of the letters of a word  $w\in M_n(A)$ can be obtained for every $n\geq 2$ from $L$ and $R$.
\end{proof}

Restricting our attention to $\mathbb{M}_{n}A$, we have the following free magma structure, defined as follows:

\begin{itemize}
\item $\mathbb{M}_{0}A=\emptyset$ 
\item $\mathbb{M}_{1}A=A$ 
\item $\mathbb{M}_{n}A=\sqcup_{p+q=n}\mathbb{M}_{p}A\times \mathbb{M}_{q}A.$ 
\end{itemize}

\begin{rem}
We can interpret $\mathbb{M}_{n}A$ differently as a set of rooted binary planar trees (each vertex has exactly two incoming edges) with $n$ leaves labelled by elements of $A$.
\end{rem}

Let $M = \{M(n)\}$ be the symmetric sequence where $M(n)$ is the subset of $\mathbb{M}_n \{1,\cdots, n\}$ consisting of the monomials in $\{1, ..., n\}$ where each element of the set occurs exactly once. The symmetric group $\Ss_n$ acts from the right on $M(n)$ by permuting the elements of the set $\{1, ..., n\}$. The symmetric sequence $M$ becomes an operad with operadic composition given by replacing letters by monomials (or grafting binary trees). The operad $M$ is called the magma operad.

\begin{cor}
Let $A_n$ be a set of cardinality $n$.
Let $M = \{M(n)\}$ be the symmetric sequence, where $M(n)$ is the subset of $\mathbb{M}_n \{A_n\}$. Then, any permuted $n$-sequence of $M(n)$ in $(A_n)^n$ can be recovered from the action of the quasigroup $(A_n,\diamond,L,R)$ on the elements of $\mathbb{M}_n \{A_n\}$ where for any  transposition $(x_ix_j)$ we put the condition that $L(x_i)=x'_i$ and $R(x_j)=x'_j$ for $x'_i=x_j$ (resp. $x'_j=x_i$) and the rest of the letters remain unchanged.

\end{cor}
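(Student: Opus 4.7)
The plan is to reduce the statement to the transposition case already established in the preceding lemma, and then to lift that case to the operadic framework of $M(n)$. Since any permutation $\sigma \in \Ss_n$ factors as a product of transpositions $(x_i x_j)$, it suffices to show that a single transposition acting on a sequence in $(A_n)^n$ is realisable through the translation maps $L$ and $R$ of the quasigroup $(A_n,\diamond,L,R)$, leaving all other letters unchanged; iterating this yields an arbitrary $\sigma$.

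First I would fix an element $w=(x_1,\ldots,x_n)\in M(n)\subset (A_n)^n$ and a transposition $(x_i\, x_j)$ acting on the $i$-th and $j$-th positions. The aim is to produce $x_i'=x_j$ and $x_j'=x_i$ using exactly one application of a left-translation and one application of a right-translation to the two relevant entries. Because $(A_n,\diamond)$ is a quasigroup, the equations
\[
a \diamond x_i = x_j, \qquad x_j \diamond b = x_i
\]
admit unique solutions $a,b\in A_n$: these are precisely guaranteed by the Latin-square property, which forces $L_a$ and $R_b$ to be bijections on $A_n$. Setting $L(x_i)=L_a(x_i)=x_j=x_i'$ and $R(x_j)=R_b(x_j)=x_i=x_j'$ therefore realises the transposition exactly as prescribed in the statement. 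The identical action is carried out on the remaining $n-2$ letters by the identity map, which is admissible since we allow any combination of $L$'s and $R$'s.

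Next I would invoke the operadic structure. The symmetric group $\Ss_n$ acts on $M(n)$ on the right by permuting the labels $\{1,\ldots,n\}$ attached to the leaves of the underlying binary planar tree. Writing $\sigma=\tau_1\cdots \tau_k$ as a product of transpositions, each $\tau_\ell$ acts on the current sequence by the translation-map recipe of the previous paragraph, and the composition of these actions implements $\sigma$ on $(A_n)^n$ without altering the parenthesisation (i.e.\ the underlying planar tree). Thus every permuted $n$-sequence is in the image of the action of the quasigroup, which is precisely the content of the corollary.

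The only genuinely delicate point is the \emph{existence} of the elements $a$ and $b$ producing the required pair $(x_i,x_j)\mapsto (x_j,x_i)$; but this is immediate from the defining property of a quasigroup recalled in item (4) of Section 2.3, namely that $L_a$ and $R_b$ are permutations of $A_n$ for every $a,b$, so the Latin-square equations in display (\ref{E:equation}) of the preceding lemma are always solvable. With that in hand, the remainder of the argument is a straightforward bookkeeping of transpositions and is essentially a rephrasing of the previous lemma in the operadic language of $M=\{M(n)\}$.
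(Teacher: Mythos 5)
Your proposal is correct and follows essentially the same route as the paper: reduce an arbitrary permutation to transpositions, realise each transposition by solving the quasigroup equations of display \eqref{E:equation} for the left and right translation maps (guaranteed by the Latin-square property), and compose. The operadic framing you add at the end is a harmless rephrasing of what the paper already does implicitly, so there is nothing substantive to flag.
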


\begin{proof}
Every $M(n)$ is formed by all words of length $n$, where letters are all distinct. 
As was previously shown any permutation of a pair of letters in a word $w \in M(n)$ is obtained from $L$ and $R$ by applying condition~\refeq{E:equation}. So, any symmetric sequence in $M(n)$ is obtained by taking a word with distinct letters and  one can apply the operators $L$ and $R$ (and condition~\eqref{E:equation}) to any pairs of letters so as this defines a transposition. So, the action of the quasigroup $(A_n,\diamond,L,R)$ on $n$-sized words with distinct letters where $n\geq 1$ allows the construction of any element in $M(n)$.
\end{proof}
\begin{dfn}
 The elements in $M(n)=(\mathbb{M}_n(A_n),\Ss_n)$ are called {\it symmetric sequences} of length $n$; whereas sequences of length $n$ defined from the alphabet $A_n$ and carrying an action of a quasigroup on $A_n$ are {\it translated sequences} of length $n$ and denoted  $M_{QG}(n)$
  \end{dfn}

Their relation can be described in the next diagram:
\begin{center}\begin{tikzcd}
M(n)\arrow[hookrightarrow,"i"]{r} & M_{QG}(n)\arrow[hookrightarrow,"j"]{r} \arrow[d,"f_n"] & M(A_n)\arrow[dl,"g"]\\
 &(A_n,\circ)& 
 \end{tikzcd}\end{center}

In short, we can consider two different objects, one being contained in the other one.  
The first one is given by the collection of symmetric sequences $\{M(n)\}_{n\geq 1}$. Morphisms between elements of $M(n)$ are given by the right action of the symmetric group $\Ss_n$. The second object is given by the collection $\{M(A_n)\}_{n\geq 1}$ where morphisms are translation maps, generated by $L$ and $R$ acting componentwise on the $n$-tuples forming $n$-sized words of $M(A_n)$. 

can be obtained from translated sequences. All these structures can be obtained set theoretically from the free magma. The free magma allows a decomposition of the magma $(A_n,\circ)$ by word length. 

So, if we restrict our considerations to the case, where translation maps form permutations, the following diagram appears:
\begin{center}\begin{tikzcd}
\arrow[ddd,"\cong"]  \mathbb{M}_n(A_n) \arrow[dr] \arrow[hookrightarrow,"i"]{r}&  M_{n}(A_n)\arrow[d,"f_n"]   \arrow[hookrightarrow,"j"]{r}&\arrow[ld,"g"]  M(A_n)\arrow[ddd,"\cong"]  \\
  &    (A_n,\circ)    \arrow[d,"\sigma\in\Ss_n"]        &  \\
 &  (A_n^\sigma,\circ)      & \\
\mathbb{M}_n(A_n^\sigma) \arrow[hookrightarrow,"i"]{r}\arrow[ur,"f_n^\sigma"]   &   \arrow[u]  M_{n}(A_n^\sigma)  \arrow[hookrightarrow,"j"]{r}      & M(A_n^\sigma)   \arrow[lu]     \arrow[uuu]  
\end{tikzcd}\end{center}

where we have the inclusion morphisms $i,j$. The inclusion $j$ goes from the degree $n$ component generated by all words of length $n$ to the free magma; $A_n^\sigma=\sigma(A_n)$, and $\sigma\in \Ss_n$ is a permutation obtained from and right combination of translation maps $L$ and $R$ such that they satisfy condition \eqref{E:equation} for any pair of transpositions.

\begin{lem}
The composition of translation maps acting on a set $A_n$ is associative.  
 \end{lem}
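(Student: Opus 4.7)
The plan is to reduce the statement to the elementary fact that function composition on any set is associative. The translation maps $L_x$ and $R_x$ were defined, for a quasigroup $(A_n,\diamond,L,R)$, as the maps $L_x:A_n\to A_n$, $a\mapsto x\diamond a$ and $R_x:A_n\to A_n$, $a\mapsto a\diamond x$. By the quasigroup axioms, each of these is a bijection of the underlying set $A_n$; in particular they are elements of the symmetric group $\mathrm{Sym}(A_n)$, and the ``composition of translation maps'' is literal composition of set-theoretic bijections.

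First I would record that, for any three functions $f,g,h:A_n\to A_n$ and any $a\in A_n$, one has $((f\circ g)\circ h)(a)=f(g(h(a)))=(f\circ (g\circ h))(a)$, so $(f\circ g)\circ h=f\circ(g\circ h)$. Applying this in the special case where $f,g,h$ are each either a left translation $L_x$ or a right translation $R_y$ yields the desired associativity of composition of translation maps on $A_n$.

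It is worth emphasising what the lemma does \emph{not} say: it does not assert associativity of the binary operation $\diamond$ itself (which for a general quasigroup fails), nor does it identify a composition $L_x\circ L_y$ with some single $L_z$. What it records is the structural fact that the subset $\{L_x, R_x\mid x\in A_n\}\subset \mathrm{Sym}(A_n)$ inherits the associative composition law of $\mathrm{Sym}(A_n)$, and hence that iterated application of translations to a word $w\in M_n(A)$ may be bracketed in any way without affecting the result. There is no genuine obstacle here: the only subtlety is conceptual, namely to distinguish between the (possibly) non-associative internal multiplication $\diamond$ of the quasigroup and the always-associative external operation of composing the bijections $L_x,R_x$, so the proof amounts to pointing out that this is the latter operation.
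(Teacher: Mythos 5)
Your proof is correct and rests on the same underlying fact as the paper's argument: composition of set-theoretic maps is associative, and the translation maps are just bijections of $A_n$. The paper instead verifies this by an explicit componentwise computation on a word of length $3$ and appeals to induction for general $n$; your direct reduction to associativity of function composition (together with the useful caveat distinguishing this from associativity of $\diamond$) is, if anything, the cleaner and more complete version of the same argument.
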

 For a word of length 3 one can for instance take: 
Given the following data:
\begin{itemize}
\item Word: $(x_1,x_2,x_3)$ 
\item $f=((L_a,R_b,L_c),(x_1,x_2,x_3))$
\item $g=((R_a,R_c,L_b),(y_1,y_2,y_3))$
\item $h=((L_b,L_c,R_b),(z_1,z_2,z_3))$
\end{itemize}

\[LHS=((h\circ g)\circ f)(x) =(L_b(R_a(y_1)),L_c(R_c(y_2)),R_b(L_b(y_3)))\circ f(x)=\]\[
(L_b(R_a(L_a(x_1))), L_c(R_c(R_b(x_2))),R_b(L_b(L_c(x_3)))) \]

\[RHS=(h\circ (g\circ f)(x)=h\circ (R_a(L_a(x_1)),R_c(R_b(x_2)),L_b(L_c(x_3)))=\]
\[L_b(R_a(L_a(x_1)),L_c(R_c(R_b(x_2))),R_b(L_b(L_c(x_3)))\]
So, RHS=LHS. 
 \begin{proof}
 One can easily check that for a word of length $n$ the statement is true using induction. 
 \end{proof}

\begin{lem}
Consider a (possibly non reduced) Latin square associated to a quasigroup $(A_n,\diamond,L,R)$, such that the first row (resp. column) corresponds to the sequence of the letters of a word $w\in\mathbb{M}_n(A_n)$. Then the $n-1$ other rows (resp. columns) of the Latin square form sequences of $n$-words being a permutation of $w$.
\end{lem}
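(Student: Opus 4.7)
The plan is to reduce the statement to two elementary observations: the combinatorial shape of $w$, and the defining property of a Latin square. First I would observe that since $w\in\mathbb{M}_n(A_n)$ is a word of length $n$ with all distinct letters drawn from a set of cardinality $n$, the sequence $(w_1,\dots,w_n)$ of its letters must exhaust $A_n$; equivalently, the assignment $i\mapsto w_i$ is a bijection $\{1,\dots,n\}\to A_n$. This is precisely the point where the hypothesis $w\in\mathbb{M}_n(A_n)$, rather than merely $w\in M_n(A_n)$, is used.

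Next I would invoke the defining property of a Latin square on $A_n$: each row (and each column) contains every element of $A_n$ exactly once. Consequently, each of the remaining $n-1$ rows, regarded as an $n$-tuple, also realises a bijection $\{1,\dots,n\}\to A_n$. Composing the inverse of the bijection associated with the first row with the bijection associated with the $i$-th row produces a permutation of $\{1,\dots,n\}$, which is exactly the statement that the $i$-th row is obtained from $w$ by permuting its positions. The same argument with ``row'' replaced by ``column'' yields the claim for columns.

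To connect this to the quasigroup structure explicitly, I would use that in the Cayley table of $(A_n,\diamond)$ the entry in the row labelled $a$ and column $j$ equals $a\diamond w_j=L_a(w_j)$, where $L_a$ is the left translation. Since $L_a$ is a bijection of $A_n$ by the quasigroup axioms, the $a$-th row is the tuple $(L_a(w_1),\dots,L_a(w_n))$, which consists of $n$ distinct elements of $A_n$; hence it is an element of $\mathbb{M}_n(A_n)$ with the same underlying set of letters as $w$, i.e.\ a permutation of $w$. The column version follows identically from the bijectivity of the right translation $R_b$. There is no real obstacle: the only subtle point is tracking the identification ``permutation of $A_n$ = permutation of the letters of $w$'', which is justified by the distinctness of the letters of $w$ guaranteed by $w\in\mathbb{M}_n(A_n)$.
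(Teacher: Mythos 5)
Your proposal is correct and follows essentially the same route as the paper: the paper's proof also indexes the rows by elements $a\in A_n$ acting by multiplication (i.e.\ the left translation $L_a$) on the letters of $w$ and appeals to the Latin square property to conclude each row is a permutation of $w$. Your version merely makes explicit the bijection bookkeeping (distinctness of the letters of $w$, bijectivity of $L_a$ and $R_b$) that the paper leaves implicit.
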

\begin{proof}
Take an $n$-word with $n$ distinct letters, $w\in \mathbb{M}_n(A_n)$. We use (a possibly non reduced version of) the Latin square, such that the sequence of letters in the word $w$ forms the first row or first column. The multiplication table, forming the latin square, gives permutations of the word $w$. Multiplying each letter of the word $w$ by an element $a\in A_n$ gives a new row or column indexed by the element $a$.
\end{proof}

 \begin{ex}
 Using the previously discussed example, it is easy to check the above lemma by taking on the first row the monomial $(badc)$. It gives a non reduced Latin square, described below. 
 \begin{center} 
\renewcommand\arraystretch{1.3}
\setlength\doublerulesep{0pt}
\begin{tabular}{r||*{4}{2|}}
$\cdot$ & b & a & d & c  \\
\hline\hline
$b$ & d & c & b & a \\ 
\hline
 $a$ & c & b & a & d \\ 
\hline
$ d$ & b & a & d & c \\ 
\hline
 $c$ & a & d & c & b \\ 
\hline
\end{tabular}
\end{center}
It is easy to see that applying the translation maps to the entire word $w$ gives three other permutations: $cbad$, for $L_a$; $dcba$ for $L_b$ and $adcb$ for $L_c$. 
 \end{ex}
 \section{Modified  parenthesised Braids as a key to code-correction}\label{S:mPaB}
 In this section, we introduced an object that we call the {\it modified  parenthesised Braids}. This object serves as a model to consider the space of all paths of errors that may occur during the transmission of a given information. A particular advantage of this object is that it helps visualise the distorsion process easily (via modified braids) and thus leads to more facility for the correction process.

Previously, we have shown that for all $n\geq1$, each $M_{n}(A)$ comes equipped with a decomposition:
\[M_{n}(A)=\sqcup_{1\leq p\leq n-1} M_{p}(A)\times M_{n-p}(A),\]
indexed by the partitions of the integer $n$. 

This precise procedure allows to define the parenthesis in the case of parenthesised words. The number of ways to insert $n$ pairs of parentheses in a word of $n+1$ letters is the celebrated Catalan number $Cat(n)$. For $n=2$ there are 2 ways: $((ab)c)$ or $(a(bc))$, as for $n=3$ there are 5 ways: $((ab)(cd)), (((ab)c)d), ((a(bc))d),$ $(a((bc)d)), (a(b(cd)))$.
This is in bijection with all the binomial$(2n,n)$ paths on $\Z$ lattice that start at $(0, 0)$ end at $(2n, 0)$,
where each step corresponds either to making a $(+1,+1)$ step or a $(+1,-1)$ step. The number of such paths that never go below the $x$-axis (also known as the Dyck paths) is $C(n)$. 

\begin{prop}
Let $n\geq 1$ be an integer. 
To every  parenthesised word in $w\in M_{n}(A_n)$ one can construct a corresponding Dyck path of size $n$ in the real plane starting at $(0, 0)$ ending at $(2n, 0)$.
\end{prop}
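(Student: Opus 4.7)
My plan is to proceed by induction on $n$, using the recursive decomposition $M_n(A) = \bigsqcup_{p+q=n,\, p,q\geq 1} M_p(A)\times M_q(A)$ introduced just above, together with the elementary fact that the concatenation of two Dyck paths is again a Dyck path.

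For the base case $n=1$, a word $w \in M_1(A_n)$ is a single letter, and I would assign to it the elementary Dyck path $\pi_w$ consisting of one up-step followed by one down-step, joining $(0,0)$ to $(2,0)$ through the point $(1,1)$. This has size $1$, matching the claim.

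For the inductive step, I would assume the construction is defined for all $k<n$. Given $w \in M_n(A_n)$, the disjointness of the decomposition singles out a unique pair $(w', w'') \in M_p(A_n) \times M_{n-p}(A_n)$ with $w = (w') \circ (w'')$ and $1 \leq p \leq n-1$. By the induction hypothesis, there exist Dyck paths $\pi_{w'}$ of size $p$, joining $(0,0)$ to $(2p,0)$, and $\pi_{w''}$ of size $n-p$, joining $(0,0)$ to $(2(n-p),0)$. I would then define $\pi_w$ by traversing $\pi_{w'}$ first and then a horizontal translate of $\pi_{w''}$ shifted to begin at $(2p,0)$. The resulting lattice path joins $(0,0)$ to $(2n,0)$, has exactly $n$ up-steps and $n$ down-steps, and stays weakly above the $x$-axis because both pieces do and they are glued at a point on the $x$-axis; hence $\pi_w$ is a Dyck path of size $n$.

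The principal point to verify is well-definedness of the recursion, and this is guaranteed by the \emph{disjoint} union structure of $M_n(A)$: the outermost parenthesis of $w$ identifies the split $(w',w'')$ uniquely, so no arbitrary choice is made. The only obstacle I foresee is purely bookkeeping---tracking sizes through the induction and verifying the non-negativity condition at each gluing---both of which are routine. Should one wish the correspondence to distinguish distinct parenthesisations (a property stronger than what the statement demands), I would instead use the recipe $w = (w') \circ (w'') \mapsto U\,\pi_{w'}\,D\,\pi_{w''}$ with a suitable adjustment of the base case, recovering the classical bijection between binary trees and Dyck paths; but this refinement is not required by the proposition as stated.
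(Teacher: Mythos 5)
Your proof is correct, but it takes a different route from the paper's. The paper does not induct on the magma decomposition at all: it reads the Dyck path directly off the parenthesisation, declaring each opening parenthesis to be an up-step $(+1,+1)$ and each closing parenthesis to be a down-step $(+1,-1)$, with some vertices labelled by the letters of the corresponding block. That recipe makes the path genuinely encode the parenthesis structure, which is the point of the Catalan-number discussion immediately preceding the proposition (though, read literally, it produces a path whose size is the number of parenthesis pairs, i.e.\ $n-1$ for a fully parenthesised $n$-letter word, so the paper's own proof does not quite match the stated size $n$). Your inductive construction --- letter $\mapsto UD$, then plain concatenation across the unique split $w=(w')\circ(w'')$ --- is more careful about well-definedness and does deliver a path of size exactly $n$ from $(0,0)$ to $(2n,0)$ as the statement literally demands; but, as you yourself observe, it identifies $((ab)c)$ and $(a(bc))$ with the same path $UDUDUD$, so it discards precisely the information the paper's version is built to retain. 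Your proposed refinement $w=(w')\circ(w'')\mapsto U\,\pi_{w'}\,D\,\pi_{w''}$ is essentially the paper's construction in recursive form, and is the variant you should adopt if you want the word ``corresponding'' to carry its intended bijective force; either way the proposition as stated is established.
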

\begin{proof}
Using the inductive construction on $M_{n}(A_n)$ mentioned above, it is easy to obtain a word with parenthesis. Now, concerning the Dyck paths, a step up i.e. with coordinates $(+1,+1)$ correspond to an opening of  parenthesis and a step down (i.e. step with $(+1,-1)$ coordinates) corresponds to a closing parenthesis. Some vertices of the Dyck path may carry a label which corresponds to the letter(s) of the corresponding block in the word. \end{proof}
Note that some vertices can be labeled by an $r$-tuple accordingly to the corresponding block.

The following Dyck path corresponds to the word $((abc)d)(ef)(g(hi))$
\begin{center}\includegraphics[scale=0.39]{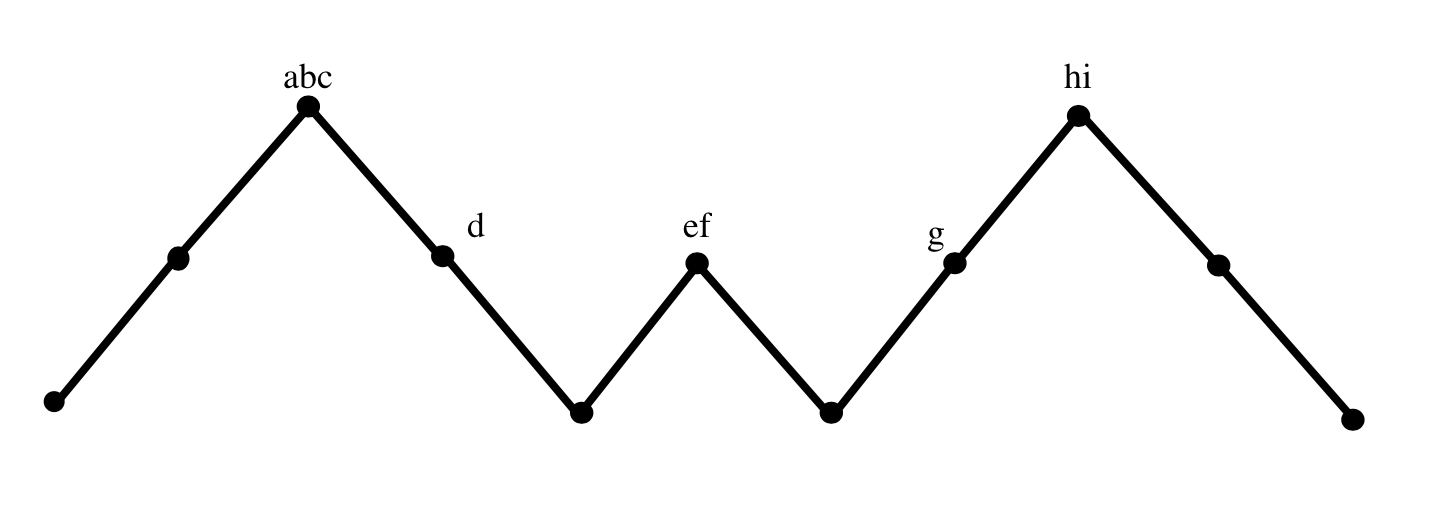}.
\end{center}

It is important to distinguish $\cPT$ from the actual quasigroup providing the left and right translation maps. 

\begin{prop}
The parenthesised translated words $\cPT$ form a category.
\begin{itemize}
\item Objects are the collection $M_n(A_n)$ of parenthesised words of length $n$, where $n\geq 1$ formed from the alphabet $A_n$.  
\item Morphisms are non-empty for words of the same length. These morphisms are given by the componentwise action on the $n$-tuple of letters  given by the translation maps $L$ and $R$ applied to the letters of the words. Translation maps can permute letters of a word or shift a given letter to another one and obey to the multiplication table given by the Latin square.
\end{itemize}
\end{prop}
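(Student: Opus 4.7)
The plan is to verify the three defining axioms of a category in sequence: closure of composition, associativity, and the existence of identity morphisms at each object. All three will follow essentially mechanically from the lemmas already proved in the preceding subsection (associativity of translation-map composition, and the fact that $L_x$ and $R_y$ are bijections).

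First, I would establish that composition is well-defined. A morphism $f \colon w \to w'$ between parenthesised words of common length $n$ is specified by an $n$-tuple $T = (T_1, \ldots, T_n)$, where each $T_i$ is a finite composition of translation maps $L_x, R_y$ (with $x, y \in A_n$) such that $T_i(w_i) = w'_i$. Given a second morphism $g \colon w' \to w''$ encoded by $(S_1, \ldots, S_n)$, the componentwise composition $(S_1 \circ T_1, \ldots, S_n \circ T_n)$ is again an $n$-tuple of compositions of translation maps, hence a morphism $w \to w''$. Closure here rests on the fact that the set of permutations of $A_n$ generated by $\{L_x, R_y\}$ forms a subset of $\Ss_n$ that is closed under composition, which is immediate from the quasigroup axioms.

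Second, associativity of composition follows at once from the lemma established just above, which proves that compositions of translation maps acting on the set $A_n$ are associative. Extending this coordinatewise to $n$-tuples transports the associativity to composition in $\cPT$. Third, identity morphisms must exist for each object $w \in M_n(A_n)$. The empty composition of translation maps, regarded as the identity permutation of $A_n$, fixes every letter, so the $n$-tuple $(\mathrm{id}_{A_n}, \ldots, \mathrm{id}_{A_n})$ acts as $\mathrm{id}_w$ and trivially satisfies unitality on both sides of composition.

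The delicate point, and the one I would expect to require the most care, is making precise what constitutes the underlying data of a morphism: whether it is an $n$-tuple of translation maps presented as a formal word in the generators $L_x, R_y$, or the equivalence class of such tuples under equal action on $(A_n)^n$. Only the latter convention yields a genuinely small category (otherwise the identity is not canonically realised by a single generator), so I would fix it from the outset. With that choice, closure, associativity and identities are all inherited directly from the corresponding properties of the subgroup of $\Ss_n$ generated by the translation maps, and the geometric description in the statement (that morphisms permute or shift letters according to the Latin square) is guaranteed by the lemmas already established.
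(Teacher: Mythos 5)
Your proposal is correct and follows essentially the same route as the paper: verify closure of componentwise composition of translation maps, obtain associativity from the lemma proved just before, and exhibit identities (the paper realises $\mathrm{id}_w$ via quasigroup invertibility of the translation maps rather than as the empty composition, but this is immaterial). Your added remark on whether a morphism is a formal word in $L_x, R_y$ or its induced action on $(A_n)^n$ is a point of care the paper's proof does not make explicit, but it does not change the argument.
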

\begin{proof}
Objects are the collection of parenthesised words of size $n$, where $n\geq 0$, formed from a $Card(n)$ alphabet $A_n$ and where letters are allowed to repeat. Morphisms map an $n$-sized word to another $n$-sized word, using the left and right translation maps. Those maps act accordingly to the corresponding $n\times n$ Latin square, on the letters of the word. Composition of the translation maps are clearly allowed.  There exists an identity morphism $Id$ so that given a word $w$ we have that $Id_w:w\to w$. This is possible since for any $L_r(x):x\mapsto rx=c$ (for $x \in A_n$ a letter of $w$) there exists a divisor of $c$ giving back $x$ (by definition of a quasigroup). The associativity holds. Consider a sequence $(x_1,\cdots, x_n)$. Then one can act on each letter using the left and right translation maps so that $(x_1,\cdots, x_n)$ is mapped to  $(T_1(x_1),\cdots, T_n(x_n))$ where $x_1,\cdots, x_n\in A_n$. $T_i$ are the translation maps which can be obtained as a composition of right and left maps acting on each letter independently. The composition of translation maps is associative. In relation to this it is important to distinguish the composition of translation maps operation acting on a sequence of letters and the multiplication operation on the quasigroup. 
\end{proof}

\medskip 

\subsection{Parenthesised modified braid groupoid $\mPaB$}
In this section, we rely on the construction presented in~\cite{BarN,BrHoRo,Dri90}, in order to show that the same structure applies for the modified  parenthesised braids as for the parenthesised braids.

Roughly speaking by parenthesised braid we mean a braid whose ends (i.e. top and bottom lines) correspond to parenthesised ordered points along a line. Let $B$ be such a parenthesised braid with $n$ strands.
\smallskip 

 \smallskip 

In other words, the operad of parenthesised braids $\PaB$ is the operad in groupoids
defined as follows (see Def. 6.11~\cite{BrHoRo}).
\begin{itemize}
\item  The operad of objects is the magma operad, i.e. $Ob(\PaB) = M=\{M(n)\}_{n\geq 0}$.
\item  For each $n \geq 0$, the morphisms of the groupoid $\PaB(n)$ are morphisms in
the (colored) braid groupoid, denoted $\CoB(n)=\{\CoB(n)\}_{n \geq 0}$, where
 $Hom_{\PaB(n)}(p,q)=Hom_{\CoB(n)}(u(p),u(q))$ with $p,q\in \Ss_n$ and the morphisms are braids associated to the permutation $qp^{-1}$. 
\end{itemize}

For the reader's convenience we recall the definition of the collection of groupoids $\CoB = \{\CoB(n)\}_{n \geq 0}$, following Def. 6.1~\cite{BrHoRo}: 
\begin{itemize}
 \item $\CoB(0)$ is the empty groupoid.
 \item For $n >0$, the set of objects $Ob(\CoB(n))$ is $\Ss_n$. For our own purposes, we propose to modify here the classical perspective on this object by defining the generators of $\Ss_n$ from the point of view of translation maps, i.e. given by some specific combinations of translation maps, lying in the space of all translation maps denoted $T_n$. 
\item A morphism in $\CoB(n)$ from $p$ to $q$ is a braid $\alpha \in B(n)$ whose associated
permutation is $qp^{-1}$.
\end{itemize}
The categorical composition in $\CoB(n)$ is given by the concatenation operation of braids
\[Hom_{\CoB(n)}(p, q) \times Hom_{\CoB(n)}(q, t) \to Hom_{\CoB(n)}(p, t)\]
inherited from the braid group. We write $a \cdot b$ for the categorical composition of $a$ and $b$.

\smallskip 
\begin{rem}
As one can see, this type of object fits the description of errors of type (1) discussed earlier. Those errors are mainly given by a permutation of letters in a word. However, this forms a very restrictive subclass of possible mistakes. Moreover, it is rare to form sentences of words having all letters distinct. Therefore, we add the class (2) of possible errors to our investigations and so it is necessary to modify the definition of $\PaB$ slightly so as to obtain a larger panel of possible errors. 
\end{rem}

The above definitions being settled, we introduce the notion of modified parenthesised braid $\mPaB$. It is reminiscent to its original version $\PaB$, in the sense that objects are given by parenthesised words. Somehow,  since letters are allowed to repeat in a word, one needs to introduce a modification of the braid. This modification of the braid is given by introducing two supplementary operations: the {\it pinching} operation and the {\it attaching} operation. These operations are a geometric representation of the left (resp. right) translation of a letter into another one, if this letters has already been used in the word.  

\begin{dfn}
Consider a pair of strands in a given braid. We say that there exists a {\it pinching operation} whenever those two strands are pinched together at a point. This point lies neither on the top nor on the bottom lines of the braid.
We say that there exists an {\it attaching operation} if the pinching lies on either the the top or bottom lines of the braid. 
 \end{dfn}
 
 \begin{figure}
  \begin{center}
  \includegraphics[scale=0.15]{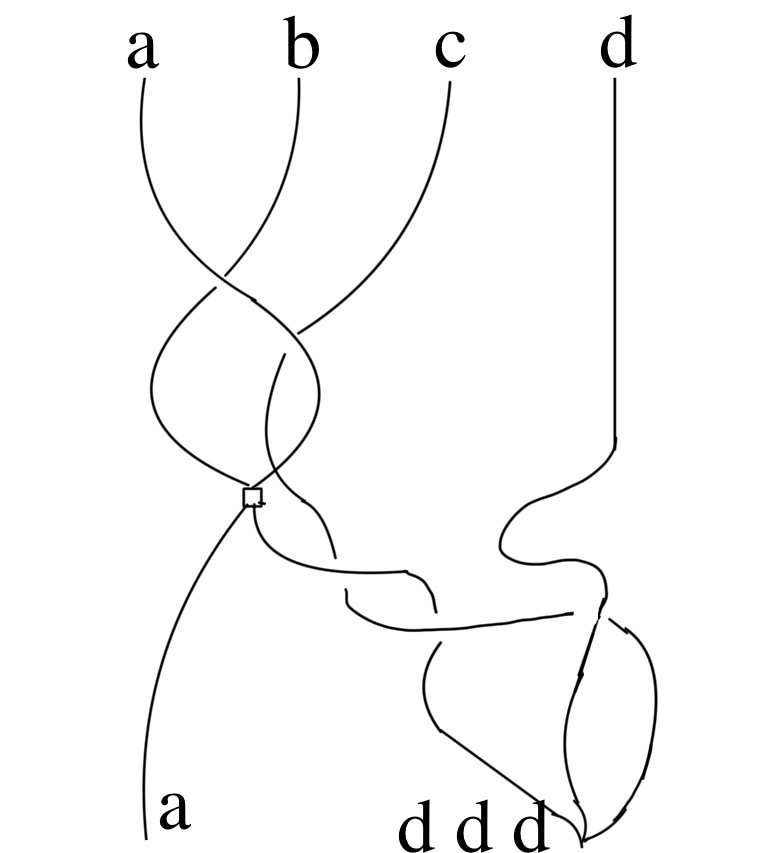}\end{center}
  \caption{Pinching and attaching points}\label{F:pinchA}
  \end{figure}
  
See an illustration of this in Fig. \ref{F:pinchA}, where a pinching point is presented between the strands starting at $a$ and $b$ and an attaching point is presented for the strands starting at $b,c,d$.  The attaching operation, occurs during the transformation of the word $abcd$ into the word $addd$.

\begin{rem}
Note that the pinching/ attaching does not imply that the strands have been intertwined. An intertwining of two strands amounts to solving equation \ref{E:equation} for a pair of translation maps applied to a pair of letters.
\end{rem}

 \begin{ex}
 We provide an additional example of a pinched (modified) braid in Fig. \ref{F:pinch}. As one can see, we have a pair of parenthesised words $(ab)(cd)$ on the top line and $a(b(cd))$ on the bottom line. The pinching occurs after that $c,d$ and $a,b$ are swapped and the word morphism maps $(ab)(cd)\to (ba)(dc)$.
The following translation map applied to $c$ (given by $L_x(c)=d$) gives the new word $b(a(dd))$, where two strands are pinched at $d$.  
 \begin{figure}  \begin{center}\includegraphics[scale=0.35]{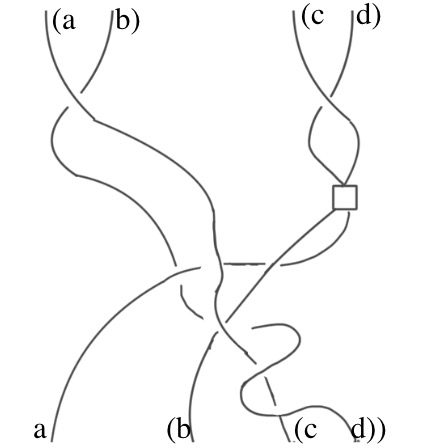}\end{center}\caption{One pinching point}\label{F:pinch}
  \end{figure}
 \end{ex}
 
 We are interested now in considering the errors of type (4),(5) and (6). This implies adding letters, losing letters or even duplicating letters. 
First, we introduce the modified parenthesised braids. 

  \begin{dfn}
 Let $\mPaB$ be the category whose objects are parenthesised words and morphisms are given by a pair $(P_{\star},\sum_{j=1}^k\beta_j\tilde{B}_{j})$, constituted from: 
 \begin{itemize}
 \item a morphism in the category of parenthesised translations, denoted $P_{\star}$;
 \item the linear sum of parenthesised modified braids $\sum_{j=1}^k\beta_j\tilde{B}_{j}$ is defined such that the skeleton of $\tilde{B}_{j}$ is $P_{\star}$; the coefficients $\beta_i$ lie in some "ground algebra". 
 \end{itemize}
 The composition law in $\mPaB$ is given by the bilinear extension of the composition law of modified parenthesised braids. 
\end{dfn}

  \begin{prop}
  The modified parenthesised braids $\mPaB$ forms a groupoid where:
 
\begin{itemize}
\item  The objects are the collections $n$-sized parenthesised translated words.
\item  For each $n \geq 0$, the morphisms are in
 the modified braid groupoid $\mCoB(n)$, where: $Hom_{\mPaB(n)}(p_{\star},q_{\star})=Hom_{\mCoB(n)}(u(p_{\star}),u(q_{\star})),$
with $p_{\star},q_{\star}\in T_n$ being translations. \end{itemize}

 The symbol $\mCoB = \{\mCoB(n)\}_{n \geq 0}$ defines the modified (colored) braids which consists of a collection of groupoids $\mCoB(n)$ defined as follows.
 \begin{itemize}
 \item $\mCoB(0)$ is the empty set.
 \item For $n >0$, the set of objects $Ob(\mCoB(n))$ are the translations $T_n$ (containing $\Ss_n$) on a set of $n$ elements, where rules of translating elements are given by the corresponding $n\times n$ Latin squares.
\item  A morphism in $\mCoB(n)$ from the translation $p_{\star}$ to the translation $q_{\star}$ is a modified braid $\alpha_{\star}\in mB(n)$ whose associated
translation is $q_{\star}p_{\star}^{-1}$.
\end{itemize}
\end{prop}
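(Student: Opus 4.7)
The plan is to follow the template of the classical proof that $\PaB$ is a groupoid (as in Drinfeld, Bar-Natan, and in the Brochier--Horel--Rovinsky formulation recalled above), adapting each axiom to accommodate the two new ingredients introduced by the modification: the object set is enlarged from $\Ss_n$ to the translation set $T_n$ (whose multiplication is governed by the Latin square of the quasigroup $(A_n,\diamond,L,R)$), and the morphism set is enlarged from $B(n)$ to the set $mB(n)$ of modified braids carrying pinching and attaching point data. I would first establish the groupoid structure of $\mCoB(n)$ for each $n \geq 1$, and then transport it to $\mPaB(n)$ along the bijection $Hom_{\mPaB(n)}(p_\star,q_\star) = Hom_{\mCoB(n)}(u(p_\star),u(q_\star))$, where $u$ forgets the parenthesisation exactly as in the $\PaB$ case.

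To verify that $\mCoB(n)$ is a groupoid I would carry out four standard checks. First, well-definedness of composition: given $\alpha_\star : p_\star \to q_\star$ and $\beta_\star : q_\star \to t_\star$, their vertical concatenation $\beta_\star \cdot \alpha_\star$ has associated translation $(t_\star q_\star^{-1})(q_\star p_\star^{-1}) = t_\star p_\star^{-1}$, as required; the endpoint data at the gluing line match because the outgoing letters of $\alpha_\star$ coincide with the incoming letters of $\beta_\star$, so any attaching point placed on those boundary lines glues coherently by construction. Second, identity: for each $p_\star \in T_n$ the trivial $n$-strand diagram labelled by $p_\star$ and with no pinchings, attachings, or crossings serves as $1_{p_\star}$, and its associated translation is $p_\star p_\star^{-1} = \mathrm{id}$. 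Third, inverses: for $\alpha_\star : p_\star \to q_\star$ the vertical reflection $\alpha_\star^{-1}$, with over/under crossings exchanged and the pinching/attaching data reversed, has associated translation $p_\star q_\star^{-1}$; the key ingredient is the quasigroup axiom that every equation $x \diamond a = b$ has a unique solution, which supplies a two-sided inverse for each translation map in $T_n$ and thus for the diagrammatic inverse of each attaching move. Fourth, associativity: braid concatenation is associative up to isotopy, and pinching/attaching being localized point data, they are preserved under such isotopies.

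Once $\mCoB(n)$ is a groupoid, the groupoid axioms for $\mPaB(n)$ are inherited verbatim via the identification of hom-sets, because parenthesisation affects only the objects and not the morphism sets; identities, inverses, composition and associativity transfer along $u$. Assembling the groupoids $\mPaB(n)$ (with empty hom-sets between objects of distinct lengths) yields the single groupoid $\mPaB$ stated in the proposition. The main obstacle I anticipate lies in Step 3 above: whereas in $\PaB$ inversion is immediate from $B(n)$ being a group, here one must give an explicit prescription for inverting the new pinching and attaching operations. The correct prescription is to read the divisions $L(x)^{-1}$ and $R(x)^{-1}$ off the Latin square as the geometric inverses of the corresponding attaching moves, and then verify that the local diagrammatic relations on $mB(n)$ are compatible with the quasigroup axioms, so that no inconsistency arises between the topology of the modified braid and the algebra of $T_n$. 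This compatibility check — in particular, that composition of $\alpha_\star$ with its proposed inverse reduces, via the allowed isotopies plus the quasigroup divisions, to the identity translation — is where the bulk of the verification work will lie.
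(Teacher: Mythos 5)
Your proposal is correct in substance and rests on the same underlying mechanism as the paper's proof: the groupoid axioms are checked one by one, with invertibility of morphisms ultimately supplied by the quasigroup axiom that $x\diamond a=b$ has a unique solution (equivalently, by the Latin square governing $T_n$). The organisation, however, is genuinely different. The paper verifies the axioms directly at the level of parenthesised words and translations --- objects are the $n$-sized parenthesised words, morphisms are translations built from $L$ and $R$, the identity $\mathrm{id}_w$ comes from adjoining a unit (passing from quasigroup to loop), composition is composition of translation maps, and inverses ``follow from the definition of translation maps'' --- and only in its last sentences attaches a modified braid with translation $q_{\star}p_{\star}^{-1}$ to each pair of translations, never re-examining the axioms on the diagrammatic side. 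You instead establish the groupoid structure of $\mCoB(n)$ first and transport it to $\mPaB(n)$ along the hom-set identification via $u$, which hews closer to the $\CoB$/$\PaB$ template the paper recalls and cleanly separates the combinatorial layer ($T_n$ and the Latin square) from the parenthesisation. Your approach also surfaces a point the paper's proof passes over silently: algebraic invertibility of a translation in $T_n$ is not the same as diagrammatic invertibility of a braid carrying pinching/attaching data (the concatenation of a pinched diagram with its vertical reflection has two pinch points and is not literally the trivial diagram, so one needs relations on $mB(n)$ collapsing such composites to the identity). You flag this as the bulk of the remaining work but do not carry it out; since the paper's own proof does not carry it out either --- it asserts invertibility only for the translations --- your proposal is not weaker than the published argument, merely more explicit about where the unfinished verification lies.
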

\begin{proof}

By definition, a groupoid is a small category in which every morphism is an isomorphism (i.e. it is invertible). A  groupoid  is given by a set 
of objects; here we take the collection of $n$-sized parenthesised words. 

For each pair of objects $w$ and $w'$ in the set of parenthesised words, there exists a (possibly empty) set of morphisms from $w$ to $w'$. 
Here those morphisms are given by translating one $n-$word into another one by using the translation maps $L$ and $R$.  This morphism can amount to a permutation of the letters of the words (like in the classical $\PaB$ case) but does not have to. In particular, letters can be shifted into other letters, creating thus a word where letters repeat.  

Now, for every word $w$, there is a designated element $\mathrm{id}_w$. This is due to the fact that in a quasi group every element is invertible and that we can, in addition, add the notion of neutral element giving the identity (and forming thus a loop). 

For each triple of objects $w, w'$, and $w''$, one has a composition of translation maps allowing the morphism $f:w\to w'$ to be composed with $g:w'\to w''$ and giving $gf: w\to w''$. Furthermore the morphisms are invertible (and this follows from the definition of translation maps).
This construction then leads to the modified braids. For a given translation $p$ and $q$ of a word, one defines an associated modified braid, whose translation is $qp^{-1}$. The domain and range are the parenthesised words corresponding to the permutations $p$ and $q$ respectively. 

\end{proof}

In order to construct a ``tower'' of modified parenthesised braids, the key setup is already existent  for the category $\PaB$ where one has the {\it extension operations, cabling operations, strand removal operations}. The whole point of the next proposition and lemma will be to first define rigorously the modified parenthesised braid groupoid and to show that the extension, cabling and strand removal operations can be inherited on this new object.

\begin{lem}
Let $(\PaB,d_i,s_i,d_0)$  be the category of parenthesised braids, equipped with the three operations known as: extension operation $d_0$, cabling operation $d_i$ and strand removal $s_i$ operation. Then, those operations are inherited on the category $\mPaB$. 
\end{lem}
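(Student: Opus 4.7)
The plan is to leverage the fact, established earlier in the paper, that $\PaB$ is a full subcategory of $\mPaB$, so that the three operations $d_0,d_i,s_i$ are already defined on a subclass of morphisms; the task then reduces to extending them coherently to the two new features of $\mPaB$, namely the pinching/attaching decorations on strands and the linear combinations of modified braids indexed by the same underlying translation skeleton $P_\star$. Since $\mPaB$-morphisms are pairs $(P_\star,\sum_j\beta_j\tilde B_j)$, I would define each operation first on a single decorated braid $\tilde B_j$ and then extend by $\beta_j$-linearity, reducing the problem to the geometric manipulation of modified braid diagrams.

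First I would handle the extension $d_0$. Geometrically this adjoins a new strand on the far left (or right) at both the source and target parenthesised words, and its action on $\tilde B$ is simply to add a vertical strand disjoint from every pinching or attaching point. Compatibility with the translation component $P_\star$ is immediate: the extension functor acts on the underlying translated word by prepending a new letter, and since the new strand is disjoint from the decoration locus, the Latin square data controlling the existing pinches/attachments is untouched. Then I would define the cabling $d_i$ by doubling the $i$-th strand; on a modified braid one must specify what happens when the $i$-th strand carries a pinching or an attaching point. The natural prescription, consistent with how $d_i$ behaves for coloured braids, is to replicate the decoration on both offspring strands, i.e.\ a pinch between strands $i$ and $k$ becomes a pair of pinches $(i',k)$ and $(i'',k)$, and an attaching point on the $i$-th strand is inherited by both cables. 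Compatibility with the quasigroup action comes from the fact that the Latin square controlling the translation is evaluated pointwise on letters, hence commutes with duplication.

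The trickiest of the three is $s_i$, the strand removal, because it interacts non-trivially with pinching/attaching: removing a strand involved in a pinching point could leave a dangling decoration. The plan is to resolve this by the rule that if the $i$-th strand participates in a pinching with strand $j$, then after removal the decoration is transferred to the surviving strand (an isolated pinching on a single strand is declared trivial), and if it participates in an attaching, the attaching point is simply erased; if the $i$-th strand is free of decorations, $s_i$ acts as in the classical $\PaB$ case. One must then check that this assignment is independent of the diagrammatic presentation (it only depends on the isotopy class of the decorated braid) and that strand removal commutes with pinching equivalences, which follows from the fact that the Latin square of the quasigroup is unaffected by deleting a letter that has been fused with its neighbour. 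The hardest step, and the one I would devote most care to, is verifying the simplicial-type relations among $d_0,d_i,s_i$ now decorated with pinching data, i.e.\ that the operations are mutually compatible and define an inherited $\Delta$-module (or operadic) structure on $\mPaB$.

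Finally, I would extend the operations to general morphisms $(P_\star,\sum_j\beta_j\tilde B_j)$ by $\beta_j$-linear extension, and observe that functoriality under composition in $\mPaB$ follows because each operation is defined strand-wise and commutes with concatenation of modified braids. This yields the desired lift $(\mPaB,d_i,s_i,d_0)$ and completes the proof.
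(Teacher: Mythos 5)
Your proposal follows essentially the same route as the paper: the paper's proof also proceeds case by case through $d_0$, $d_i$ and $s_i$, arguing geometrically that each operation on a modified braid diagram encounters ``no obstruction.'' The difference is one of thoroughness rather than strategy --- the paper simply asserts that cabling ``can be applied in any situation'' and that strand removal ``holds also in this case,'' whereas you explicitly prescribe how pinching/attaching decorations are duplicated under $d_i$, how a dangling decoration is resolved under $s_i$, and how the operations extend $\beta_j$-linearly to the sums $\sum_j\beta_j\tilde B_j$; these are precisely the points the paper's proof leaves implicit, so your version is a strictly more careful rendering of the same argument.
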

\begin{proof}
Consider $B$ a parenthesised braid with $n$ strands.
   \begin{itemize}
   \item {\it Extension operations}. Given a braid $B$, one adds on the left-most (or right-most) side a straight strand, with ends regarded as outer-most. This operation of adding a straight strand does not encounter any obstruction for the modified braids and so it is inherited from $\PaB$. 
\item {\it Cabling operations}. For $1 \leq i \leq n$, let us consider the parenthesised braid obtained from $B$ by doubling its $i$-th strand (counting at the bottom). This cabling operation can be applied in any situation: either when strands are separated as in the classical braid setting or when they are attached/ pinched. So again this operation is inherited from $\PaB$.
\item {\it Strand removal operations}. For $1 \leq i \leq n$ be the parenthesised braid
obtained from $B$ by removing its $i$-th strand (counting at the bottom). Removing a strand in the modified braid holds also in this case. \end{itemize}
So, all three operations are well defined for $\mPaB$. 
\end{proof}

\smallskip 

We will now prove that  $\PaB$  is a full subcategory of $\mPaB$.
\begin{cor}
The category $\PaB$ is a full subcategory of $\mPaB$ i.e. there exists a full inclusion:
\[ \PaB\hookrightarrow \mPaB.\]
\end{cor}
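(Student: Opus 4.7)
The plan is to exhibit an inclusion functor $\iota: \PaB \to \mPaB$ and verify that it is (i) well-defined, (ii) faithful, and (iii) full. On objects, $\iota$ sends a parenthesised word with pairwise distinct letters $w \in \mathbb{M}_n(A_n)$ (an object of $\PaB(n)$) to the same parenthesised word viewed as a parenthesised translated word in $\mPaB(n)$; this is legitimate because $\mPaB$ allows letters to repeat but does not require it, so the distinct-letter objects of $\PaB$ sit inside its object set. On morphisms, $\iota$ sends a parenthesised braid $B \in \CoB(n)$ whose underlying permutation is $qp^{-1} \in \Ss_n$ to the same geometric braid, viewed in $\mCoB(n)$ with empty set of pinching and attaching points. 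Since $\Ss_n \subset T_n$ (permutations are a subclass of the translations) and composition of braids in $\CoB$ is literally concatenation inherited from the braid group, the assignment commutes with composition and preserves identities, so $\iota$ is a functor.

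Faithfulness is immediate: two distinct parenthesised braids in $\PaB$ with the same source and target remain distinct as modified braids, because $\iota$ does not alter the underlying braid diagram and adds no new relations. More formally, the forgetful map $\mCoB(n) \to \CoB(n)$ that erases pinching/attaching points is a left-inverse to $\iota$ restricted to morphisms lying in the image, so $\iota$ is injective on $\mathrm{Hom}$-sets.

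For fullness, I would argue that any morphism $f \in \mathrm{Hom}_{\mPaB}(\iota(w), \iota(w'))$, with $w, w' \in \mathbb{M}_n(A_n)$ having distinct letters, must lie in the image of $\iota$. By definition, $f$ corresponds to a translation $q_\star p_\star^{-1} \in T_n$ together with a modified braid representative. Because the source $w$ has $n$ pairwise distinct letters and the target $w'$ also has $n$ pairwise distinct letters (both objects coming from $\PaB$), the translation encoded by $f$ cannot send two distinct letters to the same letter; by the quasigroup axiom (the left/right translation maps are bijections), such a translation is forced to be a pure permutation of letters, i.e.\ it lies in $\Ss_n \subset T_n$. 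Consequently no pinching or attaching operation can occur in any representative: a pinching/attaching event would, by its geometric description, identify a letter with an already-used letter of the target, contradicting distinctness. Hence the modified braid is an ordinary parenthesised braid, and $f = \iota(g)$ for a unique $g \in \mathrm{Hom}_{\PaB}(w, w')$.

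The step requiring the most care is this fullness argument, since it relies on the structural claim that the pinching/attaching operations are precisely what is needed beyond $\PaB$ to handle repeated letters, and on nothing else. I would make this rigorous by describing a normal form for morphisms in $\mCoB(n)$ as a pair (pure braid $\times$ sequence of pinching/attaching events), then observing that the subset of morphisms with empty pinching/attaching data is closed under composition and coincides exactly with $\CoB(n)$. Combined with the object-level inclusion $\mathbb{M}_n(A_n) \hookrightarrow M_n(A_n)$, this yields the desired full embedding $\PaB \hookrightarrow \mPaB$.
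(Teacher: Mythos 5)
Your proof is correct and follows the same basic route as the paper's, but it is substantially more complete. The paper's own proof of this corollary is two sentences long: it notes that fullness requires every $\mPaB$-morphism between $\PaB$-objects to lie in $\PaB$, and then justifies this by observing that any permutation can be realized by translation maps $L$ and $R$ satisfying the transposition condition. That observation really only establishes the easy direction, namely that the inclusion is well defined on morphisms; the actual content of fullness --- that a componentwise translation taking one distinct-letter word to another is forced to be a permutation of the underlying letters, and that consequently no pinching or attaching event can occur in any modified-braid representative --- is precisely the step you spell out, and your argument for it (bijectivity of the translation maps together with the fact that pinching/attaching is by definition triggered only by a letter colliding with an already-used letter) is the right one. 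One caveat applying equally to both proofs: the paper's definition of $\mPaB$ takes morphisms to be formal linear combinations $\sum_j \beta_j \tilde{B}_j$ of modified braids over a ground algebra, and if that is read literally the hom-sets in the image of your functor $\iota$ would need to consist of such linear combinations as well for fullness to hold; this is an ambiguity internal to the paper rather than a gap in your reasoning, but your proposed normal-form argument for $\mCoB(n)$ would be the natural place to resolve it.
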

\begin{proof} 
To have a full subcategory of $\mPaB$ it is necessary that for any objects $x,y$ in $\PaB$ every morphism of in $\mPaB$ is also in $\PaB$. This is a true statement since any permutation is given by translations maps $L$ and $R$ (satisfying conditions \eqref{E:equation} for any transposition). 
\end{proof}

In the setting of the category $\PaB$, there exists a functor $\bS$ called the {\it skeleton functor} on whose image is the category of  parenthesised permutations $\cPaP$. The operations $d_i, s_i$ of cabling and strand removal operations are naturally defined on $PaP$. The skeleton functor $\bS$ intertwines the $d_i$'s and the $s_i$'s acting on  parenthesised braids and on parentesized permutations. 
The same type of object exists for $\mPaB$ and parenthesised translations. 

Denote the  parenthesised translations $\cPT$. The skeleton functor $\bS_{\cPT}$ for the $\mPaB$ is the identity on objects, where  objects are  parenthesised words in  $\cPT$ i.e words equipped with parenthesis and where letters can be permuted or translated, giving thus possibly  parenthesised words with non distinct letters.   

\begin{prop}
The category $\cPT$ together with the functor $\bS_{\cPT}:\mPaB\to \cPT$ is a fibered linear category. 
\end{prop}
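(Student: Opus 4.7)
The plan is to verify separately the two structural ingredients of a fibered linear category: first, that every fiber of $\bS_{\cPT}$ carries a linear structure over the chosen ground algebra, and second, that $\bS_{\cPT}:\mPaB\to\cPT$ is a Grothendieck-style fibration in which cartesian transport is compatible with this linear structure.

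I would begin by unpacking the fibers. By the definition of $\mPaB$, a morphism over $P_\star:w\to w'$ in $\cPT$ is a formal linear combination $\sum_{j=1}^{k}\beta_{j}\tilde{B}_{j}$ whose summands are modified parenthesised braids $\tilde{B}_{j}$ with $\bS_{\cPT}(\tilde{B}_{j})=P_\star$. Thus $\bS_{\cPT}^{-1}(P_\star)$ is, tautologically, the free module over the ground algebra generated by the set of modified braids that reduce to $P_\star$. The first thing to check is that this set is non-empty whenever $P_\star$ is an actual morphism in $\cPT$. For this I would decompose $P_\star$ into elementary left and right translation maps $L,R$ on letters, and lift each elementary step to a pinching, attaching, or standard crossing, using the generating operations $d_i$, $s_i$, $d_0$, $\square$, $\sigma$ defined on $\mPaB$. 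The very construction of $\mPaB$ guarantees that such lifts exist, so the fiber is a non-trivial free module.

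Next I would verify that composition in $\mPaB$ is bilinear and compatible with $\bS_{\cPT}$. For composable base morphisms $P_\star$ and $Q_\star$, the bilinear extension defining $\mPaB$ gives
\[
\Bigl(\sum_{j}\beta_{j}\tilde{B}_{j}\Bigr)\cdot\Bigl(\sum_{l}\gamma_{l}\tilde{C}_{l}\Bigr)=\sum_{j,l}\beta_{j}\gamma_{l}\,(\tilde{B}_{j}\cdot\tilde{C}_{l}),
\]
and each summand lives over $Q_\star\circ P_\star$ by functoriality of the skeleton map on individual modified braids. Associativity, unit laws, and linearity in each slot are inherited from the groupoid structure on modified braids combined with the module structure of the ground algebra; this step is essentially formal. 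It suffices to check that $\bS_{\cPT}$ preserves identities and composition, which follows from the fact that identity morphisms of $\cPT$ lift to the trivial modified braid and that skeleton-reduction is multiplicative.

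The fibration condition is the substantive step. I would exhibit a cartesian lift as follows: choose, once and for all, a distinguished modified braid $\tilde{B}_{P_\star}$ with $\bS_{\cPT}(\tilde{B}_{P_\star})=P_\star$, built inductively from the elementary generators, and declare $1\cdot\tilde{B}_{P_\star}$ to be the cartesian lift over $P_\star$ with given target. Universality reduces to showing that post-composition with $\tilde{B}_{P_\star}$ induces an isomorphism of fibers, as free modules, between the fiber over $\mathrm{id}_{w'}$ and the fiber over $P_\star$; this holds because $\mPaB$ is a groupoid, so $\tilde{B}_{P_\star}$ is invertible and its inverse implements the transport in the opposite direction.

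The main obstacle I anticipate is exactly the choice of the distinguished lift $\tilde{B}_{P_\star}$: unlike the classical $\PaB$ story, where each coloured permutation lifts canonically to a positive braid, repeated letters introduce ambiguity in how a pinching or attaching operation should be inserted, so I would need to fix a normal form (for instance, by prescribing an order on the pinchings induced by a reading of the underlying Dyck path) and then verify that a different choice produces an isomorphic cartesian lift. Once this coherence is in place, the remaining verifications are formal and the fibered linear structure follows.
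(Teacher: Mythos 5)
Your first two paragraphs reproduce, in more detail, exactly what the paper's own proof does: the skeleton functor is the identity on objects, the fiber $\bS_{\cPT}^{-1}(P_{\star})$ over a morphism of $\cPT$ is the free module (a linear space) spanned by the modified braids with skeleton $P_{\star}$, and composition is the bilinear extension of composition of braids, hence compatible with $\bS_{\cPT}$. Under the definition the paper is actually using --- the one from Bar-Natan's Section 2.1.1, where a \emph{fibered linear category} is precisely a category with an identity-on-objects skeleton functor whose morphism-fibers are linear spaces and whose composition is bilinear --- your proof is already complete at that point.

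Your third step, however, imports a different and stronger notion: a Grothendieck-style fibration with cartesian lifts. That condition is not part of the definition being verified here, and it is the source of the only real difficulty you flag (the choice of a distinguished lift $\tilde{B}_{P_{\star}}$ and the coherence of that choice under reorderings of pinchings and attachings). You leave that coherence unresolved, so if the cartesian-lift reading were the intended one, your argument would have a genuine gap precisely where you say it does; as it stands, the normal-form construction is announced but not carried out, and the claim that different choices yield isomorphic cartesian lifts is asserted rather than proved. Since the statement only requires the Bar-Natan notion, I would drop that step entirely (or clearly separate it as an additional observation): it buys you transport isomorphisms between fibers, which could be useful later when passing to the $\bI$-adic completion, but it is not needed for the proposition, and including it as a necessary ingredient misrepresents what ``fibered'' means in this context. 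One further small caution: your appeal to ``$\mPaB$ is a groupoid'' to invert $\tilde{B}_{P_{\star}}$ is legitimate only because you invert a single braid, not a general linear combination; general morphisms of $\mPaB$, being formal sums, need not be invertible.
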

\begin{proof}
The category $\cPT$ together with the functor $\bS_{\cPT}:\mPaB\to \cPT$ forms a fibered linear category, for the following reasons.
First, $\cPT$ has the same objects as $\mPaB$ and the skeleton functor is the identity on objects. Secondly, the inverse image $\bS_{\cPT}^{-1}(P_{\star})$ of every morphism $P_{\star}\in \cPT$ is a linear composition of left and right translation maps $L$ (and $R$) and, similarly to the case of the parenthesised braids, it forms a linear space. The composition maps in $\mPaB$ are also bilinear in the natural sense. 
\end{proof}

\subsection{The Grothendieck--Teichm\"uller group and modified parenthesised braids}\label{S:GT}

We now discuss the following theorem.  

\begin{thm}\label{T:GT}
The pro-unipotent Grothendieck--Teichm\"uller group is contained in the groups of structure preserving automorphisms $Aut(\widehat{\mPaB})$.
\end{thm}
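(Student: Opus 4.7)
The plan is to reduce the statement to Drinfeld's classical theorem, which identifies the pro-unipotent Grothendieck--Teichm\"uller group $\widehat{GT}$ with $Aut(\widehat{\PaB})$, the group of operadic automorphisms of the pro-unipotent completion of $\PaB$ commuting with the structure operations $(d_0,d_i,s_i,\square,\sigma)$. By the corollary of the previous subsection, $\PaB$ embeds as a full subcategory of $\mPaB$, and by the preceding lemma the operations $(d_0,d_i,s_i)$ on $\mPaB$ restrict to the classical ones on $\PaB$; the operations $\square$ and $\sigma$ likewise descend from $\mPaB$ to $\PaB$. Hence any element of $Aut(\widehat{\mPaB})$ restricts to an element of $Aut(\widehat{\PaB})\cong \widehat{GT}$, and what remains to establish is the reverse inclusion, i.e.\ that each element of $\widehat{GT}$ lifts to a structure-preserving automorphism of $\widehat{\mPaB}$.

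To construct such a lift I would exploit the fibered linear category structure $\bS_{\cPT}:\mPaB\to \cPT$ proved in the last proposition. A morphism of $\mPaB$ is a pair $(P_\star,\sum_j \beta_j \tilde{B}_j)$, so by bilinearity it suffices to define the action of an element $F\in\widehat{GT}$ on a single modified parenthesised braid $\tilde B$ with translation skeleton $P_\star$. Geometrically, $\tilde B$ differs from a genuine parenthesised braid only by a finite set of pinching and attaching points, each of which is a local codimension-one degeneration of a pair of strands. One associates to $\tilde B$ a resolution $B\in\PaB$ obtained by perturbing each pinched/attached pair into distinct endpoints, applies $F$ to $B$ in the sense of Drinfeld, and then re-identifies the perturbed endpoints along the original pinching/attaching pattern to produce $F(\tilde B)$.

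The main obstacle is the verification that this prescription is (i) independent of the chosen resolution of $\tilde B$ and (ii) commutes not only with $(d_0,d_i,s_i,\square,\sigma)$ but also with the new pinching and attaching operations of $\mPaB$. For (i), I would argue by induction on the number of degenerate points that two resolutions of the same pinched configuration differ by a composition of a cabling and a strand removal, so that the action descends because $F$ commutes with $d_i$ and $s_i$ on $\widehat{\PaB}$. For (ii), compatibility should follow from the fact that pinching and attaching are local operations on strands, while the $\widehat{GT}$-action is governed by associators on triples of strands satisfying the pentagon and hexagon relations; compatibility with the degeneration then reduces to checking that the associator restricts coherently to configurations where two strands coincide, which is available in the pro-unipotent setting via the universal property of the formal associator. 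Putting these together yields the desired inclusion $\widehat{GT}\hookrightarrow Aut(\widehat{\mPaB})$.
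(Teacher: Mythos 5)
Your proposal and the paper's proof diverge at exactly the point that matters. The paper's argument is very short: it invokes $\widehat{GT}=Aut(\widehat{\PaB})$ (the Drinfeld/Bar-Natan description, with automorphisms covering the skeleton functor, intertwining $d_i,s_i,\square$ and fixing $\sigma$) and then simply asserts that ``the inclusion of $\PaB$ in $\mPaB$ implies that $Aut(\widehat{\PaB})$ is included in $Aut(\widehat{\mPaB})$.'' No lifting construction is given: the extension of an automorphism of the full subcategory $\widehat{\PaB}$ to all of $\widehat{\mPaB}$ is treated as immediate. You correctly observe that a full inclusion of categories gives, at best, a restriction map in the other direction, and that the real content of the theorem is the construction of a lift $\widehat{GT}\to Aut(\widehat{\mPaB})$. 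In that sense your proposal attempts strictly more than the paper does, and your resolution-and-re-identification scheme (perturb the pinching/attaching points of $\tilde B$ to obtain a genuine parenthesised braid, act by $F\in\widehat{GT}$, then re-identify along the original pattern) is a reasonable candidate for the missing construction.

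That said, your proposal does not close the argument either: the two verifications you flag --- independence of the chosen resolution, and compatibility of the lifted action with the pinching and attaching operations --- are precisely where the work lies, and they are only sketched. The claim that two resolutions of the same pinched configuration differ by a composition of a cabling and a strand removal would need an actual proof (a pinching point identifies two strands at an interior point of the braid, which is not literally a cabling degeneration), and the assertion that the associator ``restricts coherently to configurations where two strands coincide'' is exactly the kind of statement that can fail: the pentagon and hexagon relations are formulated on configurations of distinct strands, and degenerating two strands to a common point is not an operation of $\PaB$, so no universal property hands you that coherence for free. The honest summary is that the paper's proof contains an unjustified extension step, you identified that step as the crux and proposed the right kind of repair, but the repair as written is a programme rather than a proof.
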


In order to prove this statement, we apply the method on fibered linear categories such as shown in detail in Sec.2.1.1. ~\cite{BarN}).
Consider $\mPaB$ and $\cPT$, being respectively the categories of modified parenthesised braids and parenthesised translations.  Define a subcategory of the fibered linear category $(\mPaB, \bS : \mPaB \to\cPT)$ as follows. Let $P_{\star}$ be a morphism in $\cPT$. Choose {\bf a linear subspace} in each $\bS^{-1}(P_{\star})$, so that the system of subspaces chosen is closed under composition. Closed under composition means that two modified braids (such that the bottom line of the first modified braid coincides with the top line of the second one) lie both in the linear subspace generated by $\bS^{-1}(P_{\star})$ and defines another modified braid belonging to $\bS^{-1}(P_{\star})$. We construct an ideal $\bI$ in $(\mPaB, \bS : \mPaB \to\cPT)$, which is a subcategory. The quotient $\mPaB/\bI$ of the fibered linear category $\mPaB$ by the ideal $\bI$ is again a fibered linear category.

These fibered linear categories are compatible with further operations allowing the construction of the inverse limit of an inverse system of fibered linear categories (fibered in a compatible way over the same category of skeletons). So, if $\bI$ is an ideal in a fibered linear category $\bB$, one can form the $\bI$-adic completion and this $\bI$-adic completion is again a filtered fibered linear category.

\smallskip 

\begin{lem}
There exists a tower of modified parenthesised braids \break
$(\widehat{\mPaB}, \widehat{\mPaB}\to \cPT, d_i, s_i)$, where 
$\widehat{\mPaB}$ is the unipotent completion of $\mPaB$.   
\end{lem}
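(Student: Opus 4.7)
The plan is to construct $\widehat{\mPaB}$ by mimicking, within the modified setting, the standard $\mathbf{I}$-adic completion machinery that yields the tower of parenthesised braids in the classical case (cf.~\cite{BarN}). Since the previous proposition already gives the fibered linear category $(\mPaB,\bS_{\cPT}:\mPaB\to\cPT)$, and the preceding lemma shows that $d_i,s_i,d_0$ are inherited from $\PaB$ on $\mPaB$, the task reduces to two points: producing a compatible filtration on $\mPaB$, and verifying that the cabling/strand-removal operations survive the completion process.

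First, I would fix the augmentation ideal. For each morphism $P_\star$ in $\cPT$, choose inside the linear space $\bS_{\cPT}^{-1}(P_\star)$ the subspace spanned by the formal differences $\tilde B-\tilde B'$ of modified parenthesised braids sharing the same skeleton $P_\star$ (i.e.\ the same underlying translation, including the same pinching/attaching pattern). The collection of these subspaces is closed under composition with arbitrary morphisms of $\mPaB$ on either side, because composing two braids whose skeletons agree does not alter the skeleton; this gives an ideal $\mathbf{I}\subset\mPaB$ in the sense recalled just before the lemma, so the quotient $\mPaB/\mathbf{I}$ is again a fibered linear category over $\cPT$.

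Next, I would iterate: the powers $\mathbf{I}^{k}$ are defined by $k$-fold composition in $\mPaB$, each $\mPaB/\mathbf{I}^{k}$ is a fibered linear category over $\cPT$, and by the closure properties of fibered linear categories under inverse limits noted in the excerpt, the inverse limit
\[
\widehat{\mPaB}:=\varprojlim_{k}\,\mPaB/\mathbf{I}^{k}
\]
exists in the same category of (filtered) fibered linear categories, with a canonical functor $\widehat{\mPaB}\to\cPT$ extending $\bS_{\cPT}$. The tower structure is then tautological from the inverse system $\cdots\to\mPaB/\mathbf{I}^{k+1}\to\mPaB/\mathbf{I}^{k}\to\cdots\to\cPT$.

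The remaining and principal step is to verify that $d_i$ and $s_i$ (together with $d_0$) descend to $\widehat{\mPaB}$. For this I would check that each of these operations, viewed as functors between the relevant fibered linear categories, preserves $\mathbf{I}$: cabling doubles a strand and so sends a difference of braids with a common skeleton to a difference with a common (cabled) skeleton; strand removal erases a strand and preserves the skeleton agreement similarly; the extension $d_0$ just appends a straight strand. Consequently each operation restricts to $\mathbf{I}^{k}$ for every $k$, descends to every quotient $\mPaB/\mathbf{I}^{k}$, and therefore to the inverse limit. I expect the main obstacle to lie precisely here: one must confirm that the \emph{new} pinching and attaching generators of $\mPaB$ interact correctly with cabling and strand removal, i.e.\ that doubling or removing a strand involved in a pinch produces a well-defined pinched/attached configuration compatible with the filtration. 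Once this compatibility is checked at the level of generators (using the explicit geometric description of pinchings in Fig.~\ref{F:pinchA} and Fig.~\ref{F:pinch}) the statement follows.
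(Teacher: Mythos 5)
Your proposal follows essentially the same route as the paper: you take the augmentation ideal (your span of differences $\tilde B-\tilde B'$ over a common skeleton is exactly the paper's condition $\sum_j\beta_j=0$), form the $\bI$-adic/unipotent completion as the inverse limit of the quotients $\mPaB/\bI^{m}$, and observe that the operations $d_i,s_i,d_0$ descend. Your explicit check that cabling and strand removal preserve the filtration (and interact correctly with pinching/attaching) is a point the paper only asserts, so this is a welcome elaboration rather than a divergence.
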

\begin{proof}
Define the subcategory of the fibered linear category $(\mPaB, \bS_{\cPT} : \mPaB \to \cPT)$ as follows. Let $P_{\star}$ be a morphism in $\cPT$. We choose a linear subspace in each $\bS^{-1}(P_\star)$, so that the system of subspaces chosen is closed under composition
(two translations such that the range of the first translation $T_1$ is the domain of the second $T_2$ and both lying in the linear subspace in $\bS^{-1}(P_{\star})$ define a translation $T_1\circ T_2$ also in $\bS^{-1}(P_{\star})$).

As mentioned earlier, an ideal in $(\mPaB, \bS_{\cPT}: \mPaB \to \cPT)$ is a subcategory $\bI$ so that if at least one of the two composable morphisms $T_1$ and $T_2$ in $\cPT$ is actually in $\bI$, then their composition $T_1 \circ T_2$ is also in $\bI$. The ideal  $\bI^m$ is such that morphisms of $\bI^m$ are all those morphisms in $\mPaB$ that can be presented as compositions of $m$ morphisms in $\bI$.

In particular, given that $\bI$ is an ideal of a fibered linear category $\mPaB$, one can form the $\bI$-adic completion $\widehat{\mPaB} = \lim_{m\to \infty}\mPaB/\bI^m$, where the $\bI$-adic completion is a filtered fibered linear category.

Take $\bI$ to be the augmentation ideal of $\mPaB$ formed from all pairs $(P,\beta_jT_j)$ in which $\sum \beta_j=0$. Powers of this ideal defines the {\it unipotent filtration} of $\mPaB$, which is denoted $\cF_\mPaB=\bI^{m+1}$.  

Let $\mPaB^{(m)} =\mPaB/\cF_m \mPaB = \mPaB/\bI^{m+1}$ be the $m$-th unipotent quotient of $\mPaB$, and let 
$\widehat{\mPaB}= \lim_{ m\to \infty} \mPaB^{(m)}$,
be the unipotent completion of $\mPaB$. The fibered linear categories inherit the operations $d_i$ and $s_i$ and a coproduct and filtration $\cF_{*}$.
\end{proof}

\smallskip 

Finally  this construction leads to considering the automorphism group of the tower of modified braids $Aut(\widehat{\mPaB})$,
being the group of all functors $\widehat{\mPaB}\to \widehat{\mPaB}$, covering the skeleton functor, intertwining $d_i, s_i,\square$ (the coproduct functor $\square:\mPaB\to\mPaB\otimes \mPaB$) and fixing the elementary braid $\sigma$ (a crossing of two strands). 

\begin{proof}[Proof\, of\, Theorem~\ref{T:GT}]
We have shown that $\widehat{\mPaB}$ is an enriched version of the construction of $\widehat{PaB}$ in \cite{BarN} (in the sense that it inherits its properties and operations but has some additional structures). As well we obtained that $\PaB$ is a subcategory of $\mPaB$. 

By definition, we have that $\widehat{GT}=Aut(\widehat{\PaB})$, where $Aut(\widehat{\PaB})$ is the group of all functors $\widehat{\PaB}\to \widehat{\PaB}$ that cover the skeleton functor, intertwine $d_i, s_i$ and $\square$ and fixes $\sigma$.
The inclusion of $\PaB$ in $\mPaB$ implies that $Aut(\widehat{\PaB})$ is included in $Aut(\widehat{\mPaB})$.
So, the pro-unipotent Grothendieck--Teichm\"uller group is contained in the groups of structure preserving automorphisms $Aut(\widehat{\mPaB})$.
\end{proof}

Finally, using the inclusion theorem of~\cite{Brown} stating that the motivic Galois group is included in $Aut(\widehat{\PaB})$, we can conclude that: 
\begin{cor}\label{C:mot}
The motivic Galois group is contained in the automorphism group $Aut(\widehat{\mPaB})$.
\end{cor}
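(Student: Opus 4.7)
The plan is to chain two inclusions already essentially at our disposal. First, I invoke Brown's theorem from \cite{Brown}, which identifies the motivic Galois group as a subgroup of the pro-unipotent Grothendieck--Teichm\"uller group $\widehat{GT}$, the latter being realized as $Aut(\widehat{\PaB})$, the group of structure-preserving automorphisms of the pro-unipotent completion of the parenthesised braids (covering the skeleton functor, intertwining $d_i$, $s_i$, $\square$, and fixing the elementary braid $\sigma$). Second, I appeal directly to Theorem~\ref{T:GT} just proved, which asserts the inclusion $\widehat{GT} = Aut(\widehat{\PaB}) \hookrightarrow Aut(\widehat{\mPaB})$, obtained from the full embedding $\PaB \hookrightarrow \mPaB$ together with the fact that the operations $d_i$, $s_i$, $\square$, and the elementary braid $\sigma$ all extend from $\PaB$ to $\mPaB$.

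Composing these two inclusions immediately yields the desired statement. Concretely, if $MG$ denotes the motivic Galois group, then Brown's inclusion gives $MG \hookrightarrow Aut(\widehat{\PaB})$, and Theorem~\ref{T:GT} gives $Aut(\widehat{\PaB}) \hookrightarrow Aut(\widehat{\mPaB})$; the composite yields $MG \hookrightarrow Aut(\widehat{\mPaB})$.

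The only point requiring verification is that the inclusion $Aut(\widehat{\PaB}) \hookrightarrow Aut(\widehat{\mPaB})$ is compatible with the data used to characterise $\widehat{GT}$ as a subgroup of $Aut(\widehat{\PaB})$; that is, the extension of an automorphism of $\widehat{\PaB}$ to an automorphism of $\widehat{\mPaB}$ must still cover the skeleton functor (now valued in $\cPT$ rather than $\cPaP$), still intertwine the inherited $d_i$, $s_i$, $\square$, and still fix $\sigma$. This compatibility is exactly what the preceding lemma establishing the tower $(\widehat{\mPaB}, \widehat{\mPaB}\to \cPT, d_i, s_i)$ ensures, since the linear filtration and the cabling/strand-removal/extension operations on $\mPaB$ restrict on the full subcategory $\PaB$ to the classical ones. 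Hence no new obstacle arises and the corollary follows at once; this is why the statement is recorded as a corollary rather than as a separate theorem.
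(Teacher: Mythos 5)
Your proposal is correct and follows essentially the same route as the paper: it chains Brown's inclusion of the motivic Galois group into $Aut(\widehat{\PaB})$ with the inclusion $Aut(\widehat{\PaB})\hookrightarrow Aut(\widehat{\mPaB})$ established in Theorem~\ref{T:GT}. Your additional remark on checking compatibility of the automorphisms with the skeleton functor, the operations $d_i$, $s_i$, $\square$, and the fixing of $\sigma$ is a reasonable elaboration of a point the paper leaves implicit, but it does not change the argument.
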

We can interpret $\widehat{\mPaB}$ as modelling a situation where one considers infinitely many errors occurring. The information that it tells us is that the in a pro-unipotent completion of the space of ways of making errors has among others the behaviour of the motivic Galois group encapsulated within it.

\subsection{Conjectures and open questions}\label{S:conj}
Moufang loops turn out to be central in geometry of information, in particular for statistical manifolds (related to exponential families) and codes/error-codes.  For instance, symmetries of spaces of probability distributions, endowed with their canonical Riemannian metric of information geometry, have the structure of a commutative Moufang loop.

In a different setting, there exists a connection between Moufang loops algebraic geometry. Recall from \cite{Cu} the relation between Moufang loops and the set of algebraic points of a smooth cubic curve in a projective plane $\Pp^2_K$ over a field $K$. The set $E$ of $K$-points of such a curve $X$ forms a $CML$ with composition law  $x\circ y = u\star (x\star y)$, if $ u + x + y$ is the intersection cycle of $X$ with a projective line $\Pp ^1_K \subset \Pp^2_K$.

\smallskip 

Given that symmetries of statistical manifolds have the structure of $CML$ and that Manin's conjecture (coming from algebraic geometry) is true in the case of statistical manifolds, it leads to think that there is a stronger connection between the $CML$ coming from algebraic geometry and the $CML$ in the statistical manifolds. So, an intriguing question following from the properties of statistical manifolds defined above would be to determine whether the set $E$ of $K$-points of a pre-Frobenius statistical manifold has the structure of a $CML$. 

\bibliographystyle{alpha}

\bibliography{Bday}

\end{document}